\documentclass[12pt,a4paper]{amsart}
\usepackage[width=0.75\paperwidth,height=0.85\paperheight,twoside=false,includehead,includefoot]{geometry}
\allowdisplaybreaks[1]

\DeclareFontEncoding{OT2}{}{}
\DeclareFontSubstitution{OT2}{cmr}{m}{n}

\DeclareFontFamily{OT2}{cmr}{\hyphenchar\font45 }
\DeclareFontShape{OT2}{cmr}{m}{n}{%
   <5><6><7><8><9>gen*wncyr%
   <10><10.95><12><14.4><17.28><20.74><24.88>wncyr10}{}

\DeclareMathAlphabet{\mathcyr}{OT2}{cmr}{m}{n}
\DeclareMathAlphabet{\mathcyb}{OT2}{cmr}{b}{n}
\SetMathAlphabet{\mathcyr}{bold}{OT2}{cmr}{b}{n}

\usepackage{amsopn}

\DeclareMathOperator*{\Max}{Max}

\usepackage{amstext}
\usepackage{amsthm}
\usepackage{amssymb}
\usepackage{ytableau}
\usepackage{tikz}
\usepackage[all]{xy}

\newtheorem{thm}{Theorem}[section]
\newtheorem{lem}[thm]{Lemma}

\newtheorem{cor}[thm]{Corollary}

\theoremstyle{definition}
\newtheorem{defn}[thm]{Definition}
\newtheorem{ex}[thm]{Example}
\theoremstyle{remark}
\newtheorem{rem}[thm]{Remark}

\newcommand{\sh}{\mathbin{\mathcyr{sh}}}
\newcommand{\shaub}{\mathbin{\underline{\sh}}}

\begin{document}

\title{Cyclic relation for multiple zeta functions}

\author{Hideki Murahara}
\address[Hideki Murahara]{The University of Kitakyushu,  4-2-1 Kitagata, Kokuraminami-ku, Kitakyushu, Fukuoka, 802-8577, Japan}
\email{hmurahara@mathformula.page}

\author{Tomokazu Onozuka}
\address[Tomokazu Onozuka]{Institute of Mathematics for Industry, Kyushu University 744, Motooka, Nishi-ku,
Fukuoka, 819-0395, Japan}
\email{t-onozuka@imi.kyushu-u.ac.jp}

\subjclass[2010]{Primary 11M32}
\keywords{Multiple zeta function, Multiple zeta values, Cyclic relation, Cyclic sum formula, Derivation relation}

\begin{abstract}
 The cyclic relation obtained in a study by Hirose, Murakami, and the first-named author, is a wide class of relations, which includes the well-known cyclic sum formula for multiple zeta and zeta-star values, and the derivation relation for multiple zeta values. In this paper, we present its generalization to complex variables. Our proof includes a new proof of the cyclic relation. 
\end{abstract}

\maketitle

\section{Introduction}
\subsection{The Euler-Zagier multiple zeta function and functional relations}
For complex numbers $s_1,\ldots,s_r \in\mathbb{C}$, the Euler-Zagier multiple zeta function (MZF) is defined by
\begin{align*}
 \zeta(s_{1},\dots,s_{r}):=\sum_{1\le n_{1}<\cdots<n_{r}}\frac{1}{n_{1}^{s_{1}}\cdots n_{r}^{s_{r}}}.
\end{align*}
Matsumoto \cite{Mat02} proved that this series is absolutely convergent
in the domain 
\begin{align} \label{abc}
 \{(s_{1},\ldots,s_{r})\in\mathbb{C}^{r}\mid \Re(s_l+\cdots+s_r)>r-l+1\,\,\,(1\le l\le r)\}. 
\end{align}
Akiyama, Egami, and Tanigawa \cite{AET01} and Zhao \cite{Zha00} independently proved that $\zeta(s_{1},\dots,s_{r})$
can be meromorphically continued to the whole space $\mathbb{C}^{r}$. 
The special values $\zeta(k_{1},\dots,k_{r})$ with $k_1,\dots,k_{r-1}\in\mathbb{Z}_{\ge1}$ and $k_r\in\mathbb{Z}_{\ge2}$ 
of MZF are called the multiple zeta values (MZVs). 
The MZVs are real numbers and known to satisfy many kinds of algebraic relations over $\mathbb{Q}$. 

In \cite{Mat06}, Matsumoto raised the question of whether the known relations among MZVs are only valid for positive integers or not. 
In response, Ikeda and Matsuoka \cite{IM18} showed that, under certain conditions, except for the ``harmonic relation'', e.g., $\zeta(s_{1})\zeta(s_{2})=\zeta(s_{1},s_{2})+\zeta(s_{2},s_{1})+\zeta(s_{1}+s_{2})$, there are no such relations. 
Hirose and the authors obtained several complex generalizations of known MZV relations such as ``Sum formula'' and ``Ohno's relation'' (see \cite{HMO18-1} and \cite{HMO19}). 
In addition, many studies have been conducted by various mathematicians to provide functional relations (see \cite{MSV07}, \cite{Tsu07}, \cite{MT05}, \cite{MT06}, \cite{Nak06}, \cite{MNOT08}, \cite{FKMT17}, \cite{HMO18-2}, \cite{MN20}, and \cite{Kom20}, for example).

\subsection{Cyclic relation}
In this subsection, we review the cyclic analogue of MZVs (CMZVs) and introduce the cyclic relation obtained in a study by Hirose, Murakami, and the first-named author in \cite{HMM19}.
Let $d\in\mathbb{Z}_{\ge1}$, $r_{1},\dots,r_{d}\in\mathbb{Z}_{\ge1}$,
and $k_{1,1},\dots,k_{1,r_{1}},\dots,k_{d,1},\dots,k_{d,r_{d}}\in\mathbb{Z}_{\ge1}$.
A multi-index $((k_{1,1},\dots,k_{1,r_{1}}),\dots,(k_{d,1},\dots,k_{d,r_{d}}))$
is called an admissible cyclic index if 
\begin{itemize}
\item for all $1\le i\le d$, each index $(k_{i,1},\dots,k_{i,r_{i}})$
satisfies $k_{i,r_{i}}\ge2$ or equals $(1)$, 
\item there exists $1\le i\le d$ such that $(k_{i,1},\dots,k_{i,r_{i}})\neq(1)$. 
\end{itemize}
For an admissible cyclic index $\boldsymbol{k}=((k_{1,1},\dots,k_{1,r_{1}}),\dots,(k_{d,1},\dots,k_{d,r_{d}}))$,
we define the CMZV by
\begin{align*}
 \zeta^\mathrm{cyc}(\boldsymbol{k}):=\sum_{(n_{1,1},\dots,n_{d,r_{d}})\in S}\prod_{i=1}^{d}\prod_{j=1}^{r_{i}}\frac{1}{n_{i,j}^{k_{i,j}}}\label{eq:series_exp},
\end{align*}
where 
\begin{align*}
  S&
  :=\{(n_{1,1},\dots,n_{d,r_{d}})\in\mathbb{Z}_{\ge1}^{r_1+\cdots+r_d} 
   \mid 
   n_{1,1}<\cdots<n_{1,r_{1}}, \dots, n_{d,1}<\cdots<n_{d,r_{d}}, \\
  &\qquad n_{1,1} \le n_{2,r_{2}}, \dots, n_{d-1,1} \le n_{d,r_{d}}, n_{d,1} \le n_{1,r_{1}} \} \\
 &=\left\{\ (n_{1,1},\dots,n_{d,r_{d}})\in\mathbb{Z}_{\ge1}^{r_1+\cdots+r_d} \;
 \left| {\footnotesize \ytableausetup{centertableaux, boxsize=2.2em}
 \begin{ytableau}
  \none & \none & \none & \none & \none & \none & \none & \none[\;\;n_{d,r_{1}}] & \none[\quad\;\; \le n_{1,r_1}] &\none\\
  \none & \none & \none & \none & \none & \none & \none & \none[\text{\;\;\rotatebox{90}{\hspace{-15pt}$>\cdots>$}}] \\
  \none & \none & \none & \none & \none & \none & \none & \none[\le n_{d,r_{d}}]\\
  \none & \none & \none & \none & \none & \none & \none & \none \\
  \none & \none & \none & \none & \none & \none[\text{\reflectbox{$\ddots$}}] & \none  \\ 
  \none & \none & \none & \none \\
  \none & \none & \none & \none[\text{\qquad \rotatebox{90}{\hspace{-15pt}$>\cdots>$}}] \\
  \none & \none & \none[\quad n_{2,1}] & \none[\quad\;\; \le n_{3,r_3}] \\
  \none & \none & \none[\text{\quad\rotatebox{90}{\hspace{-15pt}$>\cdots>$}}] & \none \\
  \none & \none[n_{1,1}] & \none[\;\; \le n_{2,r_2}] & \none \\
  \none & \none[\text{\rotatebox{90}{\hspace{-15pt}$>\cdots>$}}] & \none & \none \\
  \none & \none[n_{1,r_1}] & \none & \none \\  
 \end{ytableau}}\
\right. \right\}.
\end{align*}
Let $\mathfrak{H}:=\mathbb{Q}\langle x,y\rangle$ and $z_{k}:=yx^{k-1}$ for $k\in\mathbb{Z}_{\geq1}$. 
We denote by $\mathfrak{H}^\mathrm{cyc}$ the subspace of $\oplus_{d=1}^{\infty}\mathfrak{H}^{\otimes d}$ 
spanned by 
\[
 \bigcup_{d=1}^{\infty}\{u_{1}\otimes\cdots\otimes u_{d}\in\mathfrak{H}^{\otimes d}\mid u_{1},\dots,u_{d}\in y\mathfrak{H}x \cup\{y\}\   
 \text{and there exists }j\ \text{such that }u_{j}\neq y\}.
\]
We define a $\mathbb{Q}$-linear map $Z^\mathrm{cyc}:\mathfrak{H}^\mathrm{cyc}\to\mathbb{R}$
by 
\[
 Z^\mathrm{cyc}(z_{k_{1,1}}\cdots z_{k_{1,r_{1}}}\otimes\cdots\otimes z_{k_{d,1}}\cdots z_{k_{d,r_{d}}})
 =\zeta^\mathrm{cyc}((k_{1,1},\dots,k_{1,r_{1}}),\dots,(k_{d,1},\dots,k_{d,r_{d}})).
\]
We also define a shuffle product $\sh:\mathfrak{H}\times\mathfrak{H}\to\mathfrak{H}$
by 
\[
1\sh w=w\sh1=w, \quad
uw\sh u'w'=u(w\sh u'w')+u'(uw\sh w'),
\]
where $u,u'\in\{x,y\}$ and $w,w'\in\mathfrak{H}$. 
Now we state the cyclic relation which is a natural extension of the well-known cyclic sum formulas and the derivation relation (see Remark \ref{remrem}). 
\begin{thm}[Cyclic relation; Hirose--Murahara--Murakami \cite{HMM19}] \label{cycrel}
 For $u_{1}\otimes\cdots\otimes u_{d}\in\mathfrak{H}^\mathrm{cyc}$, we have 
 \begin{align*}
  &\sum_{i=1}^{d}
  Z^\mathrm{cyc}(u_{1}\otimes\cdots\otimes u_{i-1}\otimes(y\shaub u_{i})\otimes u_{i+1}\otimes\cdots\otimes u_{d}) \\
  &=\sum_{i=1}^{d} 
  Z^\mathrm{cyc}(u_{1}\otimes\cdots\otimes u_{i}\otimes y\otimes u_{i+1}\otimes\cdots\otimes u_{d}),
 \end{align*} 
 where $y\shaub u_{i}=y\sh u_{i}-yu_{i}-u_{i}y$. 
\end{thm}
\begin{rem} \label{remrem}
Hoffman and Ohno \cite[Eq.(1)]{HO03} and Ohno and Wakabayashi \cite{OW06} gave clean-cut decompositions of the well-known sum formulas for MZVs and multiple zeta-star values (MZSVs), which are called the cyclic sum formulas. 
Note that these cyclic sum formulas are known to be equivalent (see \cite[Section 4]{IKOO11} and \cite[Proposition 3.3]{TW10}). 
Theorem \ref{cycrel} gives the cyclic sum formula for MZSVs if $u_i=yx^{k_i}$ for all $1\le i\le d$ (for details, see \cite[Section 5.1]{HMM19}).
Theorem \ref{cycrel} also contains the derivation relation for MZVs, which was obtained by Ihara, Kaneko, and Zagier \cite[Theorem 3]{IKZ06}, if $u_1\in y\mathfrak{H}x$ and $u_2=\cdots=u_d=y$ (for details, see \cite[Corollary 11 and Theorem 12]{HMM19}).
\end{rem}

Before concluding this subsection, we mention the number of linearly independent relations supplied 
by the cyclic sum formula \cite[Eq.(1)]{HO03}, the derivation relation \cite[Theorem 3]{IKZ06}, 
and the cyclic relation \cite[Theorem 2]{HMM19}. 
Table 1 indicates that, among MZVs, the cyclic relation provides, not all, but a large number of linearly independent relations. 
In the table, the first line means the weight of MZVs 
(we name $k:=k_1+\cdots +k_r$ the weight for $\zeta(k_1,\dots ,k_r)$). 
The second and subsequent lines of the table give the number of linearly independent relations.
The computations are performed using Mathematica. 
Note that the number of the independent relations is obtained by rewriting the original relations into MZV relations by using a well-known series representation.
\vspace{2ex}
\begin{table}[!h]
\begin{center}
\caption{Number of Independent Relations for MZVs}
\begin{tabular}{|c|r|r|r|r|r|r|r|r|r|r|r|r|r|} 
\hline
Weight & 3& 4& 5& 6& 7& 8& 9& 10 & 11 \\ 
\hline
Cyclic sum formula & 1& 2& 4& 6& 12& 18& 34& 58& 106 \\  
\hline
Derivation relation & 1& 2& 5& 10& 22& 44& 90& 181& 363 \\  
\hline
Cyclic relation & 1& 2& 5& 10& 25& 52& 110& 228& 466 \\  
\hline
All relations & 1& 3& 6& 14& 29& 60& 123& 249& 503 \\  
\hline
\end{tabular}
\end{center}
\end{table}

\subsection{Main result}
In this paper, we give a generalization of Theorem \ref{cycrel} to complex variables.  
For $i=1,\dots,d$, let $n_{i,1},\dots,n_{i,r_i}$ be positive integers and $s_{i,1},\ldots,s_{i,r_i}$ complex numbers. 
We write 
\begin{align*}
 \boldsymbol{s}_i
 &:=(s_{i,1},\dots,s_{i,r_i}), \\
 \boldsymbol{n}_i^{\boldsymbol{s}_i} 
 &:=n_{i,1}^{s_{i,1}} \cdots n_{i,r_i}^{s_{i,r_i}}, \\
 r
 &:=r_1+\cdots+r_d. 
\end{align*}
For these variables, we also write
\begin{align*}
 \boldsymbol{s}
 &:=(\boldsymbol{s}_{1},\dots,\boldsymbol{s}_{d}), \\
 \boldsymbol{n}^{\boldsymbol{s}} 
 &:=\boldsymbol{n}_{1}^{\boldsymbol{s}_{1}}\cdots\boldsymbol{n}_{d}^{\boldsymbol{s}_{d}}. 
\end{align*}
For positive integers $d$ and $r_1,\dots,r_d$, set 
\begin{align*}
 W
 &=W(r_{1},\dots,r_{d}) \\
 &:=\{ \boldsymbol{s}\in \mathbb{C}^r \mid \Re(s_{i,r_i})>1,\Re(s_{i,r_i-1}+s_{i,r_i})>2,\dots,\Re(s_{i,1}+\cdots+s_{i,r_i})>r_i, \\
 &\qquad\qquad\qquad\qquad\qquad\qquad\qquad\qquad\qquad\qquad\qquad(i=1,\dots,d \textrm{ with }r_i\ne1) \\ 
 &\qquad\qquad\qquad \Re(s_{i,r_i})\ge1\qquad\qquad\qquad\qquad\qquad\quad (i=1,\dots,d \textrm{ with }r_i=1) \}
\end{align*}
if $(r_1,\dots,r_d)\ne (1,\dots,1)$,
and set 
\begin{align*}
 W
 &=W(\underbrace{1,\dots,1}_{d}) \\
 &:=\{ \boldsymbol{s}\in \mathbb{C}^r \mid 
   \Re(s_{1,1}+\cdots+s_{d,1})>d,\\ 
   &\qquad\qquad\quad\;\;\;
   \Re(s_{l,1}+\cdots+s_{l+i,1})>i\,\,\,(1\le l\le d,\; 0\le i\le d-2)\},
\end{align*}
where we understand $s_{d+i,1}=s_{i,1}$.
Then, if $\boldsymbol{s}\in W \cap \mathbb{Z}_{\ge1}^r$, Theorem \ref{cycrel} can be written as 
\begin{align} \label{cccrel}
 \sum_{i=1}^{d} \sum_{j=1}^{r_i} \sum_{m=\delta_{j,r_i}}^{s_{i,j}-1} \sum_{S_{i,j}} 
 \frac{ n_{i,j}^{m} }{ \boldsymbol{n}^{\boldsymbol{s}} n^{m+1} }
 =\sum_{i=1}^{d} \sum_{S_i} \frac{ 1 }{ \boldsymbol{n}^{\boldsymbol{s}} n },  
\end{align} 
 where 
 \begin{align*}
  S_{i,j}&
  :=\{(n_{1,1},\dots,n_{d,r_{d}},n)\in\mathbb{Z}_{\ge1}^{r+1} 
   \mid 
   n_{1,1}<\cdots<n_{1,r_{1}}, \dots, n_{d,1}<\cdots<n_{d,r_{d}}, \\
  &\qquad 
   n_{1,1} \le n_{2,r_{2}}, \dots, n_{i-2,1} \le n_{i-1,r_{i-1}}, 
   n_{i-1,1} \le \Max\{ n_{i,r_i}, n \}, \\
  &\qquad 
   n_{i,1} \le n_{i+1,r_{i+1}}, \dots, n_{d-1,1} \le n_{d,r_{d}}, n_{d,1} \le n_{1,r_{1}}, 
   n_{i,j}<n<n_{i,j+1} \}, \\
  S_{i}&
  :=\{(n_{1,1},\dots,n_{d,r_{d}},n)\in\mathbb{Z}_{\ge1}^{r+1} 
   \mid 
   n_{1,1}<\cdots<n_{1,r_{1}}, \dots, n_{d,1}<\cdots<n_{d,r_{d}}, \\
  &\qquad n_{1,1} \le n_{2,r_{2}}, \dots, n_{d-1,1} \le n_{d,r_{d}}, n_{d,1} \le n_{1,r_{1}}, 
   n_{i,1}\le n\le n_{i+1,r_{i+1}} \}.
 \end{align*}
 Here we understand $n_{i,r_{i}+1}=\infty$ for $i=1,\dots,d$,  
 $n_{0,j}=n_{d,j}$ for $j=1,\dots,r_d$, 
 $n_{d+1,r_{d+1}}=n_{1,r_1}$, and $\delta_{j,r_i}$ denotes the Kronecker delta function (for details, see Section 3.2). 

Now, we define the functions $\widetilde\zeta_{i,j} (\boldsymbol{s})$ and $\zeta^{\mathrm{cyc}}_{i} (\boldsymbol{s})$ to give a generalization of Eq.\eqref{cccrel} to complex variables. 
Note that the functions $\widetilde\zeta_{i,j}$ and $\zeta^{\mathrm{cyc}}_{i}$ correspond to the left-hand and the right-hand sides of Eq.\eqref{cccrel} at the integer points, respectively. 
\begin{defn}
 Let $d$ and $r_1,\dots,r_d$ be positive integers. 
 For positive integers $i,j$ with $1\le i\le d$ and $1\le j\le r_i$, and $\boldsymbol{s}\in W$, we define
 \begin{align*}
  \widetilde\zeta_{i,j} (\boldsymbol{s})
  &:=\sum_{S_{i,j}} 
   \biggl(
    \frac{ n_{i,r_i}^{\delta_{j,r_i}} }{ \boldsymbol{n}^{\boldsymbol{s}} n^{\delta_{j,r_i}} (n-n_{i,j} ) }
    -\frac{ n_{i,j}^{s_{i,j}} }{ \boldsymbol{n}^{\boldsymbol{s}} n^{s_{i,j}} (n-n_{i,j} ) }
   \biggr), \\
  \zeta^{\mathrm{cyc}}_{i} (\boldsymbol{s}) 
  &:=\sum_{S_i} \frac{ 1 }{ \boldsymbol{n}^{\boldsymbol{s}} n },
 \end{align*}
 where 
 \begin{align*}
  S_{i,j}&
  :=\{(n_{1,1},\dots,n_{d,r_{d}},n)\in\mathbb{Z}_{\ge1}^{r+1} 
   \mid 
   n_{1,1}<\cdots<n_{1,r_{1}}, \dots, n_{d,1}<\cdots<n_{d,r_{d}}, \\
  &\qquad 
   n_{1,1} \le n_{2,r_{2}}, \dots, n_{i-2,1} \le n_{i-1,r_{i-1}}, 
   n_{i-1,1} \le \Max\{ n_{i,r_i}, n \}, \\
  &\qquad 
   n_{i,1} \le n_{i+1,r_{i+1}}, \dots, n_{d-1,1} \le n_{d,r_{d}}, n_{d,1} \le n_{1,r_{1}}, 
   n_{i,j}<n<n_{i,j+1} \}, \\
  S_{i}&
  :=\{(n_{1,1},\dots,n_{d,r_{d}},n)\in\mathbb{Z}_{\ge1}^{r+1} 
   \mid 
   n_{1,1}<\cdots<n_{1,r_{1}}, \dots, n_{d,1}<\cdots<n_{d,r_{d}}, \\
  &\qquad n_{1,1} \le n_{2,r_{2}}, \dots, n_{d-1,1} \le n_{d,r_{d}}, n_{d,1} \le n_{1,r_{1}}, 
   n_{i,1}\le n\le n_{i+1,r_{i+1}} \}.
 \end{align*}
 Here we understand $n_{i,r_{i}+1}=\infty$ for $i=1,\dots,d$, 
 $n_{0,j}=n_{d,j}$ for $j=1,\dots,r_d$, 
 $n_{d+1,r_{d+1}}=n_{1,r_1}$, and $\delta_{j,r_i}$ denotes the Kronecker delta function. 
\end{defn}
Then our theorem is presented as follows:
\begin{thm} \label{main}
 For $\boldsymbol{s}\in W$, we have 
 \begin{align*}
  \sum_{i=1}^{d} \sum_{j=1}^{r_i}  \widetilde\zeta_{i,j} (\boldsymbol{s})
  =\sum_{i=1}^{d} \zeta^{\mathrm{cyc}}_{i} (\boldsymbol{s}).
 \end{align*}
\end{thm}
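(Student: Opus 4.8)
The plan is to prove the identity directly, as an equality of two \emph{absolutely convergent} series on $W$; since on $W$ every series in sight converges, no meromorphic continuation is needed.

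\emph{Step 1 (absolute convergence).} For $\boldsymbol{s}\in W$ I would first check that each $\zeta^{\text{C}}_i(\boldsymbol{s})$ and each $\widetilde\zeta_{i,j}(\boldsymbol{s})$ converges absolutely. For $\zeta^{\text{C}}_i$ this is immediate from Matsumoto's absolute convergence domain \eqref{abc}, since the condition $n_{i,1}\le n\le n_{i+1,r_{i+1}}$ only shrinks the summation set. For $\widetilde\zeta_{i,j}$ one has to exploit the cancellation built into the summand: when $j<r_i$ it equals $\bigl(\prod_{(k,l)\ne(i,j)}n_{k,l}^{-s_{k,l}}\bigr)\bigl(n_{i,j}^{-s_{i,j}}-n^{-s_{i,j}}\bigr)(n-n_{i,j})^{-1}$, and combining $|n_{i,j}^{-s_{i,j}}-n^{-s_{i,j}}|\le 2 n_{i,j}^{-\Re(s_{i,j})}$ with $\sum_n(n-n_{i,j})^{-1}=O(\log n_{i,j+1})$ reduces absolute convergence to \eqref{abc} once more; when $j=r_i$ one first applies the partial-fraction rewriting of Step 2, after which (summing over $n$) the contribution is $O\bigl(n_{i,r_i}^{-\Re(s_{i,r_i})}\log n_{i,r_i}\bigr)\prod_{(k,l)\ne(i,r_i)}n_{k,l}^{-\Re(s_{k,l})}$. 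The boundary cases $\Re(s_{i,r_i})=1$ at singleton blocks, and the whole regime $W(1,\dots,1)$, are exactly where one must use the cyclic constraints themselves to supply the missing decay, just as in the proof of \eqref{abc}. Absolute convergence permits any rearrangement; in particular I may fix the ``block variables'' $n_{k,l}$ and carry out the sum over the auxiliary variable $n$ (and over the index $j$) first.

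\emph{Step 2 (a uniform form for the summands).} The single partial fraction I need is $\dfrac{m}{n(n-m)}=\dfrac1{n-m}-\dfrac1n$ for integers $1\le m<n$. Applying it to the $j=r_i$ summand (and leaving the others untouched), the summand of $\widetilde\zeta_{i,j}(\boldsymbol{s})$ takes, for every $(i,j)$, the uniform shape
\[
\frac{1}{\boldsymbol{n}^{\boldsymbol{s}}(n-n_{i,j})}\;-\;\frac{n_{i,j}^{s_{i,j}}}{\boldsymbol{n}^{\boldsymbol{s}}\,n^{s_{i,j}}\,(n-n_{i,j})}\;-\;\delta_{j,r_i}\,\frac{1}{\boldsymbol{n}^{\boldsymbol{s}}\,n}.
\]
The weight-one term $\dfrac{1}{\boldsymbol{n}^{\boldsymbol{s}}\,n}$, surfacing exactly when $j=r_i$, is the seed of the right-hand side.

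\emph{Step 3 (relabelling, telescoping, and cyclic assembly).} The crucial remark is that the middle term $\dfrac{n_{i,j}^{s_{i,j}}}{\boldsymbol{n}^{\boldsymbol{s}}n^{s_{i,j}}(n-n_{i,j})}$ becomes, after cancelling $n_{i,j}^{s_{i,j}}$, a term of exactly the \emph{first} kind for a relabelled configuration: regard the value $n$ as sitting in position $(i,j)$ (carrying the exponent $s_{i,j}$) and the old value $n_{i,j}$ as a fresh auxiliary variable. With the block variables fixed, summing the displayed summand over $j$ and over $n$ should therefore collapse by telescoping, and the relabelling couples what is left to the \emph{neighbouring} blocks: the leftover ``first kind'' term at $j=r_i$ --- precisely the place where, in $S_{i,j}$, the cyclic-chain link into block $i$ is relaxed from $n_{i-1,1}\le n_{i,r_i}$ to $n_{i-1,1}\le n$ via the $\Max$ --- is cancelled by a relabelled leftover coming from block $i+1$, once one sums over $i=1,\dots,d$. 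What cannot cancel are the weight-one residues; using $\dfrac{m}{n(n-m)}=\dfrac1{n-m}-\dfrac1n$ and $\dfrac1{m(n-m)}=\dfrac1{mn}+\dfrac1{n(n-m)}$ to move the surviving auxiliary variable out of its within-block slot into the between-block position $n_{i,1}\le n\le n_{i+1,r_{i+1}}$, they assemble exactly into $\sum_{i=1}^d\zeta^{\text{C}}_i(\boldsymbol{s})$. The precise choice of the (non-strict) inequalities, of the $\Max$, and of the sets $S_{i,j}$, $S_i$ is dictated by, and is exactly what makes consistent, these successive relabellings. (When $d=1$ the cyclic coupling is vacuous; the single-block identity that remains can, by the same bookkeeping, be identified with a form of the complex sum formula of Hirose and the authors.)

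\emph{Main obstacle.} The genuinely delicate part is Step 3: keeping simultaneous and exact track --- through the successive relabellings --- of the within-block slot occupied by the auxiliary variable, of the relaxation of the cyclic-chain link near block $i$, and of the cyclic index, so that every leftover is matched by a leftover of an adjacent block and the only residue is $\sum_i\zeta^{\text{C}}_i(\boldsymbol{s})$. This domain bookkeeping is the combinatorial heart of the proof, and it is what also makes the argument a new, purely series-theoretic proof of the cyclic relation of Hirose, Murakami, and the first-named author.
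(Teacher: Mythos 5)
Your proposal is correct and follows essentially the same route as the paper: the partial fraction $\tfrac{m}{n(n-m)}=\tfrac{1}{n-m}-\tfrac{1}{n}$ at $j=r_i$, the relabelling of the subtracted term $n_{i,j}^{s_{i,j}}/(\boldsymbol{n}^{\boldsymbol{s}}n^{s_{i,j}}(n-n_{i,j}))$ as a ``first-kind'' term one slot over, the resulting telescoping within each block, and the cyclic matching of the block-boundary leftovers into $\sum_i\zeta^{\text{C}}_i(\boldsymbol{s})$ are exactly the content of Lemmas \ref{31} and \ref{32} and the final assembly in the paper (whose identity $\widetilde\zeta^{(1)}_{i,j}=\widetilde\zeta^{(2)}_{i,j+1}$ is precisely your Step 3). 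The ``domain bookkeeping'' you defer is carried out in the paper by summing over the auxiliary variable $n$ first and recording the answer as explicit harmonic sums over $S$, $T_i$, and $T_{i+1}$, which is a clean way to discharge your stated main obstacle.
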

\begin{ex}
 When $d=1$ and $r_1=r\ge2$, we have 
 \begin{align*}
  &\sum_{j=1}^{r}
  \sum_{\substack{ 1\le n_{1}<\cdots<n_{j}<n \\ n<n_{j+1}<\cdots<n_{r} } }
  \biggl(
   \frac{ n_{j}^{\delta_{j,r}} }{ n_{1}^{s_{1}} \cdots n_{r}^{s_{r}} n^{\delta_{j,r}} (n-n_{j})} 
   -\frac{ n_{j}^{s_{j}} }{ n_{1}^{s_{1}} \cdots n_{r}^{s_{r}} n^{s_{j}}(n-n_{j})}  
  \biggr) \\
  &=\sum_{\substack{ 1\le n_{1}<\cdots<n_{r} \\ n_{1}\le n \le n_{r} }}
   \frac{ 1 }{ n_{1}^{s_{1}}\cdots n_{r}^{s_{r}} n } 
 \end{align*}
 for $\Re(s_{r})>1,\Re(s_{r-1})+\Re(s_{r})>2,\dots,\Re(s_{1})+\cdots+\Re(s_{r})>r$.
\end{ex}
\begin{ex}
 When $d=1$ and $r_1=r=1$, we have 
 \begin{align*}
  \sum_{ 1\le n_{1}<n }
  \biggl(
   \frac{ n_{1} }{ n_{1}^{s} n (n-n_{1})} 
   -\frac{ 1 }{ n^{s}(n-n_{1})}  
  \biggr) 
  =\sum_{ 1\le n }
   \frac{ 1 }{ n^{s+1} } 
 \end{align*}
 for $\Re(s)>1$. 
 Note that this is equivalent to 
 \[
  \zeta_{\textrm{MT}} (s-1,1;1) -\zeta(1,s)=\zeta(s+1)
 \]
 which was obtained by Tsumura, 
 where $\zeta_{\textrm{MT}}$ is the Mordell-Tornheim double zeta function (see \cite[Proposition 2.1]{MT05}).
\end{ex}
When $r_1=\cdots=r_d=1$, we obtain the following corollary. 
\begin{cor}[Cyclic sum formula]
 For $\boldsymbol{s}\in W$, we have 
 \begin{align*}
  &\sum_{i=1}^{d} \sum_{\substack{ 1\le n_i\le\cdots\le n_d\le n_1\le \cdots \le n_{i-1} \le n \\ n\ne n_i }}
  \biggl(
   \frac{ n_{i} }{ n_1^{s_1} \cdots n_d^{s_d} n (n-n_{i} ) }
   -\frac{ n_{i}^{s_{i}} }{ n_1^{s_1}\cdots n_d^{s_d} n^{s_{i}} (n-n_{i} ) }
  \biggr) \\
  &=d\zeta(s_1+\cdots+s_d+1). 
 \end{align*}
\end{cor}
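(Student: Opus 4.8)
The plan is to obtain the Corollary as the special case $r_1=\cdots=r_d=1$ of Theorem~\ref{main}, after simplifying both sides. In this case $r=d$; writing $n_i:=n_{i,1}$ and $s_i:=s_{i,1}$ we have $\boldsymbol n^{\boldsymbol s}=n_1^{s_1}\cdots n_d^{s_d}$, the region $W$ is $W(\underbrace{1,\dots,1}_{d})$, and for each $i$ the only value of the second index is $j=1=r_i$, so $\delta_{j,r_i}=\delta_{1,1}=1$ and $n_{i,j}=n_{i,r_i}=n_i$. Substituting this into the definition of $\widetilde\zeta_{i,1}$ makes its summand equal to
\[
 \frac{n_i}{n_1^{s_1}\cdots n_d^{s_d}\,n\,(n-n_i)}-\frac{n_i^{s_i}}{n_1^{s_1}\cdots n_d^{s_d}\,n^{s_i}\,(n-n_i)},
\]
which is exactly the bracketed expression on the left-hand side of the Corollary.

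Next I would unwind the index sets. When every $r_k=1$ the inner orderings $n_{k,1}<\cdots<n_{k,r_k}$ are vacuous, each inequality $n_{k,1}\le n_{k+1,r_{k+1}}$ reads $n_k\le n_{k+1}$, the condition $n_{i,j}<n<n_{i,r_i+1}$ reads $n_i<n<\infty$, and hence $\Max\{n_{i,r_i},n\}=\Max\{n_i,n\}=n$. Chaining the surviving inequalities, and using the cyclic conventions $n_{0,j}=n_{d,j}$ and $n_{d+1,r_{d+1}}=n_{1,r_1}$, collapses $S_{i,1}$ to
\[
 \{(n_1,\dots,n_d,n)\in\mathbb{Z}_{\ge1}^{d+1}\mid n_i\le n_{i+1}\le\cdots\le n_d\le n_1\le\cdots\le n_{i-1}\le n,\ n\ne n_i\},
\]
which is precisely the index set under the sum in the Corollary. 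Likewise $S_i$ imposes the cyclic chain $n_1\le n_2\le\cdots\le n_d\le n_1$ together with $n_{i,1}\le n\le n_{i+1,r_{i+1}}$; the cyclic chain forces $n_1=\cdots=n_d$, and the remaining two inequalities then force this common value to equal $n$. Thus $S_i=\{(m,\dots,m,m)\mid m\in\mathbb{Z}_{\ge1}\}$ and
\[
 \zeta^{\text{C}}_i(\boldsymbol s)=\sum_{m\ge1}\frac{1}{m^{s_1}\cdots m^{s_d}\,m}=\zeta(s_1+\cdots+s_d+1),
\]
the series converging on $W$ because $\Re(s(1,d))>d\ge1$. Summing over $i=1,\dots,d$ gives $\sum_{i=1}^{d}\zeta^{\text{C}}_i(\boldsymbol s)=d\,\zeta(s_1+\cdots+s_d+1)$, and Theorem~\ref{main} then yields the asserted identity.

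There is no analytic difficulty here: convergence of both sides on $W$ is already part of Theorem~\ref{main}, and the rest is bookkeeping. The one point deserving care is the combinatorial reduction of $S_{i,1}$ and $S_i$ in the degenerate case $r_1=\cdots=r_d=1$, especially the boundary cases $i=1$ and $i=d$; this is routine once the conventions $n_{i,r_i+1}=\infty$, $n_{0,j}=n_{d,j}$, and $n_{d+1,r_{d+1}}=n_{1,r_1}$ from the Definition are applied carefully.
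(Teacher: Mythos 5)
Your proposal is correct and matches the paper's route exactly: the paper presents this corollary as the immediate specialization $r_1=\cdots=r_d=1$ of Theorem~\ref{main}, and your unwinding of $S_{i,1}$ (the chain $n_i\le n_{i+1}\le\cdots\le n_{i-1}\le n$ with $n\ne n_i$) and of $S_i$ (forcing $n_1=\cdots=n_d=n$, hence $\zeta^{\text{C}}_i(\boldsymbol{s})=\zeta(s_1+\cdots+s_d+1)$) is precisely the bookkeeping the paper leaves implicit.
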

\begin{rem} \label{ooooo}
 Assume $s_1\dots,s_d\in\mathbb{Z}_{\ge1}$ in the above equality, we have the cyclic sum formula 
 for the MZSVs obtained by Ohno and Wakabayashi \cite{OW06}:
 \begin{align} \label{llmel}
  \sum_{i=1}^{d} \sum_{m=1}^{s_{i}-1}
  \zeta^{\star}(s_{i}-m,s_{i+1},\dots,s_{d},s_{1},\dots,s_{i-1},m+1)=(s_1+\cdots+s_d) \zeta(s_1+\cdots+s_d+1) 
 \end{align}
 (for details, see Section 3.2). 
\end{rem} 
\begin{rem} \label{18} 
 When $\boldsymbol{s}\in W \cap \mathbb{Z}_{\ge1}^r$, Theorem \ref{main} is equivalent to Eq.\eqref{cccrel} (see Section 3.2). 
 It is known that Eq.\eqref{cccrel} gives the derivation relation \cite[Theorem 3]{IKZ06}
 when $\boldsymbol{s}_1\in\mathbb{Z}_{\ge1}^{r_1}$ and $\boldsymbol{s}_2=(1),\dots,\boldsymbol{s}_d=(1)$ (see \cite[Section 5]{HMM19}). 
\end{rem}

\section{Proof of convergence}
\begin{lem} \label{21}
 Let $d\ge0$ and $r\ge1$. 
 For $u_1,\dots,u_d,v_1,\dots,v_r \in\mathbb{R}$, the series
 \begin{align*}
  \sum_{\substack{ 1\le m_1\le \cdots \le m_{d} \le n_r \\ 1\le n_1<\cdots<n_r }} 
  \frac{1}{ m_1^{u_1} \cdots m_{d}^{u_{d}} n_1^{v_1} \cdots n_r^{v_r} }
 \end{align*}
 is convergent when $u_1,\dots,u_d\ge1$ and $v_r>1, v_{r-1}+v_r>2, \dots, v_{1}+\cdots+v_r>r$. 
\end{lem}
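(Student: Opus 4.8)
The plan is to reduce the convergence of the stated series to the known absolute convergence region \eqref{abc} of the Euler-Zagier MZF, by summing out the auxiliary variables $m_1,\dots,m_d$ first. First I would note that the constraints $1\le m_1\le\cdots\le m_d\le n_r$ mean that for each fixed tuple $1\le n_1<\cdots<n_r$, the inner sum over $m_1,\dots,m_d$ is
\[
 \sum_{1\le m_1\le\cdots\le m_d\le n_r} \frac{1}{m_1^{u_1}\cdots m_d^{u_d}},
\]
which, since each $u_i\ge1$, is bounded above by $\bigl(\sum_{m=1}^{n_r} m^{-1}\bigr)^d \ll (\log n_r + 1)^d \ll n_r^{\varepsilon}$ for any $\varepsilon>0$ (and for $n_r\ge2$, say, with the $n_r=1$ term handled trivially). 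Thus the whole series is bounded, up to a constant, by
\[
 \sum_{1\le n_1<\cdots<n_r} \frac{1}{n_1^{v_1}\cdots n_{r-1}^{v_{r-1}} n_r^{v_r-\varepsilon}}.
\]

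Next I would invoke Matsumoto's absolute convergence criterion \eqref{abc}: the displayed series converges provided $v_r-\varepsilon>1$, $v_{r-1}+(v_r-\varepsilon)>2$, \dots, $v_1+\cdots+v_{r-1}+(v_r-\varepsilon)>r$. Since by hypothesis all the strict inequalities $v_r>1$, $v_{r-1}+v_r>2$, \dots, $v_1+\cdots+v_r>r$ hold, there is room to choose $\varepsilon>0$ small enough that all of them survive the perturbation by $-\varepsilon$ (only finitely many inequalities, each strict). This gives convergence of the majorant, hence of the original nonnegative series. The case $d=0$ is exactly \eqref{abc} with no perturbation needed, and the case $r=1$ is immediate since then the condition is just $v_1>1$ and the $m$-sum contributes $(\log n_1+1)^d$.

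The only point requiring a little care — and the step I expect to be the mildest obstacle — is making the harmonic-sum bound $\sum_{m=1}^{N} m^{-1}\ll \log N+1 \ll N^{\varepsilon}$ uniform and legitimately absorbable: one should fix $\varepsilon$ at the outset (depending only on the gap in the finitely many inequalities $v_1+\cdots+v_j - j > 0$), then use $(\log N+1)^d\le C_{d,\varepsilon} N^{\varepsilon}$ for all $N\ge1$. After that, the estimate is purely term-by-term domination by a convergent series, so Tonelli/Fubini for nonnegative terms lets us rearrange freely and conclude. No analytic continuation or delicate cancellation is involved here; the lemma is a soft majorization argument built on top of Matsumoto's theorem.
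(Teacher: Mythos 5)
Your argument is correct and is essentially identical to the paper's own proof: both bound the inner sum over $m_1,\dots,m_d$ by $\ll n_r^{\epsilon}$ using $u_i\ge 1$ and the harmonic-sum estimate, then invoke the absolute-convergence region \eqref{abc} with $v_r$ perturbed to $v_r-\epsilon$, the strict inequalities leaving room for a sufficiently small $\epsilon$. The paper states this more tersely, but the reduction and the key estimate are the same.
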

\begin{proof}
 For any small $\epsilon>0$, we have
 \begin{align*}
  \sum_{1\le m_1 \le \cdots \le m_{d} \le n} 
  \frac{1}{ m_1^{u_1} \cdots m_{d}^{u_{d}} }
  \le \sum_{m_1=1}^{n} \cdots \sum_{m_d=1}^{n} \frac{1}{ m_1 \cdots m_{d} }
  \ll n^{\epsilon}. 
 \end{align*}
 Then, from Eq.\eqref{abc}, we obtain the result. 
\end{proof}

\begin{lem}
 The series 
 \begin{align*}
  \zeta^{\mathrm{cyc}} (\boldsymbol{s}) 
  &:=\sum_{S} \frac{ 1 }{ \boldsymbol{n}^{\boldsymbol{s}} }
 \end{align*}
 is absolutely convergent for $\boldsymbol{s} \in W$. 
\end{lem}
\begin{proof}
 When $r_1=\cdots=r_d=1$, we have $\zeta^{\mathrm{cyc}} (\boldsymbol{s})=\zeta(s_{1,1}+\cdots+s_{d,1})$, 
 which implies the lemma. 
 Now we consider the case $(r_1,\dots,r_d)\ne(1,\dots,1)$. 
 Note that 
 \begin{align} 
  \begin{split} \label{vvvvv}
  &\sum_{\substack{ 1\le m_1\le \cdots \le m_{d} \le n_r \\ 1\le n_1<\cdots<n_r \\
   n_1 \le m'_1\le \cdots \le m'_{d'} \le n'_{r'} \\ 1\le n'_1<\cdots<n'_{r'}
   }} 
  \frac{1}{ m_1^{u_1} \cdots m_{d}^{u_{d}} n_1^{v_1} \cdots n_r^{v_r} }
  \cdot 
  \frac{1}{ {m'}_1^{u'_1} \cdots {m'}_{d'}^{u'_{d'}} {n'}_1^{v'_1} \cdots {n'}_{r'}^{v'_{r'}} } \\
  &\le \sum_{\substack{ 1\le m_1\le \cdots \le m_{d} \le n_r \\ 1\le n_1<\cdots<n_r }} 
  \frac{1}{ m_1^{u_1} \cdots m_{d}^{u_{d}} n_1^{v_1} \cdots n_r^{v_r} }
  \sum_{\substack{ 1\le m'_1\le \cdots \le m'_{d'} \le n'_{r'} \\ 1\le n'_1<\cdots<n'_{r'} }}
  \frac{1}{ {m'}_1^{u'_1} \cdots {m'}_{d'}^{u'_{d'}} {n'}_1^{v'_1} \cdots {n'}_{r'}^{v'_{r'}} } 
  \end{split}
 \end{align}
 holds for sufficiently large $u_1,\dots,u_d,v_1,\dots,v_r,u'_1,\dots,u'_{d'},v'_1,\dots,v'_{r'}$.
 From the previous lemma and Eq.\eqref{abc}, we get the result. 
\end{proof}

\begin{lem}
 Let $d$ and $r_1,\dots,r_d$ be positive integers with $(r_1,\dots,r_d)\ne (1,\dots,1)$. 
 For positive integers $i,j$ with $1\le i\le d$ and $1\le j\le r_i$, the series 
 \begin{align*}
  \sum_{S_{i,j}} 
   \frac{ n_{i,r_i}^{\delta_{j,r_i}} }{ \boldsymbol{n}^{\boldsymbol{s}} n^{\delta_{j,r_i}} (n-n_{i,j} ) }
 \end{align*} 
 is absolutely convergent for $\boldsymbol{s}\in W$. 
\end{lem}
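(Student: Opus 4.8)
The plan is to reduce to real exponents, to split $S_{i,j}$ into the two cases $j<r_i$ and $j=r_i$, to carry out the sum over the auxiliary index $n$, and to compare what is left with the absolute convergence of $\zeta^{\text{C}}$ proved just above (after, if necessary, decreasing a single exponent by an arbitrarily small $\epsilon>0$). Since $|n_{i',j'}^{s_{i',j'}}|=n_{i',j'}^{\Re(s_{i',j'})}$, absolute convergence of the series means precisely that the series of positive terms obtained by replacing each $s_{i',j'}$ by $\Re(s_{i',j'})$ converges, and this is what I prove. The $\epsilon$-decrease step is legitimate because $W$ is open, and whenever the exponent to be lowered sits on a singleton block (where $W$ only requires $\Re\ge1$) one may first push the $\epsilon$-power along the chain of constraints $n_{k,1}\le n_{k+1,r_{k+1}}$ until it lands on the last coordinate of a block of length $\ge2$, where the relevant inequality of $W$ is strict.

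\emph{Case $j<r_i$.} Here $\delta_{j,r_i}=0$ and the summand is $1/(\boldsymbol n^{\boldsymbol s}(n-n_{i,j}))$; moreover $n_{i,j}<n<n_{i,j+1}\le n_{i,r_i}$, so $\Max\{n_{i,r_i},n\}=n_{i,r_i}$ and the condition $n_{i-1,1}\le\Max\{n_{i,r_i},n\}$ is literally the condition $n_{i-1,1}\le n_{i,r_i}$ appearing in $S$; hence deleting the coordinate $n$ maps $S_{i,j}$ into $S$. Summing $n$ over the finite interval $(n_{i,j},n_{i,j+1})$ produces the factor $\sum_{1\le k<n_{i,j+1}-n_{i,j}}1/k\ll_\epsilon n_{i,j+1}^{\epsilon}$; after absorbing it into an exponent as above, the series is $\le\zeta^{\text{C}}$ at a point of $W$, hence converges.

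\emph{Case $j=r_i$.} Now $\delta_{j,r_i}=1$, the summand is $n_{i,r_i}/(\boldsymbol n^{\boldsymbol s}\,n\,(n-n_{i,r_i}))$, the index $n$ runs over all $n>n_{i,r_i}$ (since $n_{i,r_i+1}=\infty$), and $\Max\{n_{i,r_i},n\}=n$ relaxes the $S$-link $n_{i-1,1}\le n_{i,r_i}$ to $n_{i-1,1}\le n$, so convergence must be extracted from the factor $1/n$. The key identity is
\[
 \sum_{n>n_{i,r_i}}\frac{n_{i,r_i}}{n\,(n-n_{i,r_i})}=\sum_{n>n_{i,r_i}}\Bigl(\tfrac{1}{n-n_{i,r_i}}-\tfrac1n\Bigr)=\sum_{k=1}^{n_{i,r_i}}\tfrac1k\ll_\epsilon n_{i,r_i}^{\epsilon}.
\]
I would then split on the sign of $n_{i-1,1}-n_{i,r_i}$. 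On $\{n_{i-1,1}\le n_{i,r_i}\}$ the lower bound $n\ge n_{i-1,1}$ is implied by $n>n_{i,r_i}$, so summing $n$ over $n>n_{i,r_i}$ is exact and, by the identity, replaces the summand by $\bigl(\sum_{k\le n_{i,r_i}}1/k\bigr)/\boldsymbol n^{\boldsymbol s}$ over the index set whose surviving constraints are exactly those of $S$ (the restored link $n_{i-1,1}\le n_{i,r_i}$ included); after absorbing $n_{i,r_i}^{\epsilon}$ this is $\le\zeta^{\text{C}}$ at a point of $W$. On $\{n_{i-1,1}>n_{i,r_i}\}$ one keeps the lower bound and uses $\sum_{n\ge n_{i-1,1}}n_{i,r_i}/(n(n-n_{i,r_i}))=\sum_{p=n_{i-1,1}-n_{i,r_i}}^{\,n_{i-1,1}-1}1/p\le n_{i,r_i}/(n_{i-1,1}-n_{i,r_i})$, which decays once $n_{i-1,1}$ exceeds $2n_{i,r_i}$; decomposing this region into dyadic ranges $2^{t}n_{i,r_i}\le n_{i-1,1}<2^{t+1}n_{i,r_i}$ (on which $n_{i-1,1}-n_{i,r_i}\asymp n_{i-1,1}$ and $n_{i,r_i}\asymp 2^{-t}n_{i-1,1}$ for $t\ge1$), the $n$-sum is $\ll 2^{-t}$, and this factor together with a bounded redistribution of exponent mass between $n_{i,r_i}$ and $n_{i-1,1}$ reduces each dyadic piece to a $\zeta^{\text{C}}$-type sum with exponent vector in $W$; the sum over $t$ is a convergent geometric series.

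\emph{Main obstacle.} All the difficulty is in the case $j=r_i$, and it is essentially bookkeeping. One must not squander the decay of $1/n$: the crude bound $\sum_{k\le n_{i,r_i}}1/k\ll n_{i,r_i}^{\epsilon}$ by itself is insufficient, because discarding the constraint $n\ge n_{i-1,1}$ opens the cyclic scheme into a chain whose sum can genuinely diverge — for instance when the block $i-1$ across the relaxed link is a singleton carrying exponent exactly $1$ — so that constraint really has to be retained and handled as above; and one must keep track at each stage of which defining inequalities of $S_{i,j}$ survive the $n$-summation and verify that the perturbed exponent vector still lies in $W$ (this is exactly where the block-$i$ inequalities $\Re(s_{i,r_i})>1,\ \Re(s_{i,r_i-1})+\Re(s_{i,r_i})>2,\dots$ packed into $W$ are used). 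Once these reductions are set up, the statement follows from the already-established absolute convergence of $\zeta^{\text{C}}$.
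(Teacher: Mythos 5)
Your proposal is correct in outline but follows a genuinely different route from the paper. The paper first uses the factorization inequality \eqref{vvvvv} to decouple the cyclic constraint structure into independent chains of first-elements terminating in a non-singleton block, so that only the three model sums \eqref{a111}--\eqref{a113} need to be estimated; the troublesome factor $\tfrac{n_{i,r_i}}{n(n-n_{i,r_i})}$ in the $j=r_i$ case is then handled, with the constraint $n_{i-1,1}\le n$ retained exactly as you insist it must be, by the single estimate $\sum_{n>a}\tfrac{1}{n^{1-\epsilon}(n-a)}\ll a^{2\epsilon-1}$: its gain of $a^{-1}$ cancels the numerator $n_{i,r_i}$ and leaves only a harmless $n_{i,r_i}^{2\epsilon}$, absorbed via \eqref{abc} or, when block $i$ is a singleton, via the forward link $n_{i,1}\le n_{i+1,r_{i+1}}$ (the same device as your ``push the $\epsilon$ along the chain''). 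You instead evaluate the $n$-sum exactly by telescoping and split on the sign of $n_{i-1,1}-n_{i,r_i}$, which forces a dyadic decomposition. Both arguments correctly identify the crux --- that discarding $n\ge n_{i-1,1}$ is fatal when a singleton block carries exponent of real part exactly $1$ --- but the paper's one-line estimate buys a much shorter proof, while your exact telescoping is more transparent in the region $n_{i-1,1}\le n_{i,r_i}$.

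One step of yours is asserted rather than proved and needs care: on the dyadic piece $2^{t}n_{i,r_i}\le n_{i-1,1}<2^{t+1}n_{i,r_i}$ the surviving index set is \emph{not} contained in $S$ (the link $n_{i-1,1}\le n_{i,r_i}$ is violated; only the relaxed link $n_{i-1,1}<2^{t+1}n_{i,r_i}$ holds), so the piece cannot literally be bounded by $\zeta^{\text{C}}$ at a point of $W$. One must rerun the convergence argument for $\zeta^{\text{C}}$ with one cyclic link relaxed by the factor $2^{t+1}$, which costs an extra $2^{(t+1)\epsilon}$ in the chain estimate of Lemma \ref{21}; this is then beaten by your $2^{-t}$ from the $n$-sum, and the $t=0$ piece needs the harmonic bound $\ll n_{i,r_i}^{\epsilon}$ rather than $n_{i,r_i}/(n_{i-1,1}-n_{i,r_i})$. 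With that supplement your argument closes.
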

\begin{proof}
From Eq.\eqref{vvvvv}, it suffices to show 
 \begin{align} 
  \label{a111}
  &\sum_{\substack{ 1\le m_1\le \cdots \le m_{d} \le n_r \\ 1\le n_1<\cdots<n_i<n<n_{i+1}<\cdots<n_r }} 
  \frac{1}{ m_1^{u_1} \cdots m_{d}^{u_{d}} n_1^{v_1} \cdots n_r^{v_r} (n-n_i) }, \\
  \label{a112}
  &\sum_{\substack{ 1\le m_1\le \cdots \le m_{d} \le n \\ 1\le n_1<\cdots<n_r<n }} 
  \frac{n_r}{ m_1^{u_1} \cdots m_{d}^{u_{d}} n_1^{v_1} \cdots n_r^{v_r} n(n-n_r) }, \\
  \label{a113}
  &\sum_{\substack{ 1\le m_1\le \cdots \le m_{d} \le n \\ 1\le m_1'<n \\ 1\le m_1' \le \cdots \le m_{d'}' \le n_r \\ n_1<\cdots<n_r }} 
  \frac{ m_1' }{ m_1^{u_1} \cdots m_{d}^{u_{d}} m_1'^{u_1'} \cdots m_{d'}'^{u_{d'}'} n_1^{v_1} \cdots n_r^{v_r} n(n-m_1') }  
 \end{align}
 are convergent when $u_1,\dots,u_d, u_1',\dots,u_{d'}'\ge1$ and $v_r>1, v_{r-1}+v_r>2, \dots, v_{1}+\cdots+v_r>r$. 
 
 For Eq.\eqref{a111}, 
 since
 \begin{align*}
  \sum_{n_i<n<n_{i+1}} \frac{1}{n-n_i} 
  \ll n_{i+1}^{\epsilon} 
 \end{align*}
 for any small $\epsilon>0$, we have
 \begin{align*} 
  &\sum_{\substack{ 1\le m_1\le \cdots \le m_{d} \le n_r \\ 1\le n_1<\cdots<n_i<n<n_{i+1}<\cdots<n_r }} 
  \frac{1}{ m_1^{u_1} \cdots m_{d}^{u_{d}} n_1^{v_1} \cdots n_r^{v_r} (n-n_i) } \\
  &\ll 
  \sum_{\substack{ 1\le m_1\le \cdots \le m_{d} \le n_r \\ 1\le n_1<\cdots<n_r }} 
  \frac{ n_{i+1}^{\epsilon}  }{ m_1^{u_1} \cdots m_{d}^{u_{d}} n_1^{v_1} \cdots n_r^{v_r} }. 
 \end{align*}
 We obtain the result by using Lemma \ref{21}. 

 For Eq.\eqref{a112}, by following an argument similar to the proof in Lemma \ref{21}, we have
 \begin{align*}
  &\sum_{\substack{ 1\le m_1\le \cdots \le m_{d} \le n \\ 1\le n_1<\cdots<n_r<n }} 
  \frac{n_r}{ m_1^{u_1} \cdots m_{d}^{u_{d}} n_1^{v_1} \cdots n_r^{v_r} n(n-n_r) } \\
  &\ll \sum_{ 1\le n_1<\cdots<n_r<n } 
  \frac{n_r n^{\epsilon} }{ n_1^{v_1} \cdots n_r^{v_r} n(n-n_r) }. 
 \end{align*}  
 Since 
 \begin{align*}
  \sum_{n_r<n} \frac{1}{n^{1-\epsilon}(n-n_r)} 
  &=\sum_{n=1}^{\infty} \frac{1}{(n+n_r)^{1-\epsilon}n} 
  \ll \frac{1}{n_r^{1-2\epsilon}}, 
 \end{align*}
 we have
 \begin{align*}
  \sum_{ 1\le n_1<\cdots<n_r<n } 
  \frac{n_r n^{\epsilon} }{ n_1^{v_1} \cdots n_r^{v_r} n(n-n_r) } 
  \ll \sum_{ 1\le n_1<\cdots<n_r } 
  \frac{ n_r^{2\epsilon} }{ n_1^{v_1} \cdots n_r^{v_r} }.
 \end{align*}
 By Eq.\eqref{abc}, we have the result. 

 For Eq.\eqref{a113}, when $u_1'>1$, it is easy to check the convergence 
 by using Lemma \ref{21}, Eq.\eqref{vvvvv}, and the convergence of Eq.\eqref{a112}. 
 When $u_1'=1$, by following an argument similar to the proof in Lemma \ref{21}, 
 we have
 \begin{align*}
  &\sum_{\substack{ 1\le m_1\le \cdots \le m_{d} \le n \\ 1\le m_1'<n \\ 1\le m_1' \le \cdots \le m_{d'}' \le n_r \\ n_1<\cdots<n_r }} 
  \frac{ m_1' }{ m_1^{u_1} \cdots m_{d}^{u_{d}} m_1'^{u_1'} \cdots m_{d'}'^{u_{d'}'} n_1^{v_1} \cdots n_r^{v_r} n(n-m_1') } \\
  &\ll \sum_{\substack{ 1\le m_1'<n \\ 1\le m_1' \le \cdots \le m_{d'}' \le n_r \\ n_1<\cdots<n_r }} 
  \frac{ m_1' n^{\epsilon} }{ m_1'^{u_1'} \cdots m_{d'}'^{u_{d'}'} n_1^{v_1} \cdots n_r^{v_r} n(n-m_1') }.
 \end{align*}
 Since     
 \begin{align*}
  \sum_{1 \le m_1' \le n_r}\sum_{ m_1'<n } \frac{ m_1' }{ m_1' n^{1-\epsilon} (n-m_1') } 
  \ll \sum_{1 \le m_1' \le n_r} \frac{1}{m_1'^{1-2\epsilon} } 
  \ll n_r^{2\epsilon}, 
 \end{align*}
 we have
 \begin{align*}
  &\sum_{\substack{ 1\le m_1'<n \\ 1\le m_1' \le \cdots \le m_{d'}' \le n_r \\ n_1<\cdots<n_r }} 
  \frac{ m_1' n^{\epsilon} }{ m_1'^{u_1'} \cdots m_{d'}'^{u_{d'}'} n_1^{v_1} \cdots n_r^{v_r} n(n-m_1') } \\
  &\ll \sum_{\substack{ 1\le m_2' \le \cdots \le m_{d'}' \le n_r \\ n_1<\cdots<n_r }} 
  \frac{ n_r^{2\epsilon} }{ m_2'^{u_2'} \cdots m_{d'}'^{u_{d'}'} n_1^{v_1} \cdots n_r^{v_r} }. 
 \end{align*}
Following Lemma \ref{21}, this completes the proof. 
\end{proof}

\begin{lem}
 Let $d$ and $r_1,\dots,r_d$ be positive integers with $(r_1,\dots,r_d)\ne (1,\dots,1)$. 
 For positive integers $i,j$ with $1\le i\le d$ and $1\le j\le r_i$, the series 
 \begin{align*}
  \sum_{S_{i,j}} 
   \frac{ n_{i,j}^{s_{i,j}} }{ \boldsymbol{n}^{\boldsymbol{s}} n^{s_{i,j}} (n-n_{i,j} ) }
 \end{align*} 
 is absolutely convergent for $\boldsymbol{s}\in W$. 
\end{lem}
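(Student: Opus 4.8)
The plan is to bound the modulus of the summand pointwise on $S_{i,j}$ by something already shown to be summable, splitting into cases. Since
\[
\left| \frac{ n_{i,j}^{s_{i,j}} }{ \boldsymbol{n}^{\boldsymbol{s}} n^{s_{i,j}} (n-n_{i,j}) } \right|
= \frac{ n_{i,j}^{\Re(s_{i,j})} }{ n^{\Re(s_{i,j})}\,(n-n_{i,j})\,\prod_{k=1}^{d}\prod_{l=1}^{r_k} n_{k,l}^{\Re(s_{k,l})} }
\]
and $n_{i,j}<n$ always holds on $S_{i,j}$, I would distinguish the cases $j=r_i$, $j<r_i$ with $\Re(s_{i,j})\ge0$, and $j<r_i$ with $\Re(s_{i,j})<0$.

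The first two cases reduce directly to the previous lemma (absolute convergence of the first term). If $j=r_i$, then $n>n_{i,r_i}$ and $\Re(s_{i,r_i})\ge1$ by the definition of $W$, hence $(n_{i,r_i}/n)^{\Re(s_{i,r_i})}\le n_{i,r_i}/n$, and the quantity above is at most $n_{i,r_i}\big/\bigl(|\boldsymbol{n}^{\boldsymbol{s}}|\,n\,(n-n_{i,r_i})\bigr)$, which is exactly the modulus of the $j=r_i$ summand treated there. If $j<r_i$ and $\Re(s_{i,j})\ge0$, then $(n_{i,j}/n)^{\Re(s_{i,j})}\le1$, so the quantity is at most $1\big/\bigl(|\boldsymbol{n}^{\boldsymbol{s}}|\,(n-n_{i,j})\bigr)$, the modulus of the $\delta_{j,r_i}=0$ summand of the previous lemma on the same set $S_{i,j}$. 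In both cases absolute convergence is immediate.

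The substantive case is $j<r_i$ with $\Re(s_{i,j})<0$; set $a:=-\Re(s_{i,j})>0$. Then the numerator $n_{i,j}^{\Re(s_{i,j})}$ cancels the corresponding factor of $|\boldsymbol{n}^{\boldsymbol{s}}|$, so the modulus equals $n^{a}\big/\bigl((n-n_{i,j})\prod_{(k,l)\ne(i,j)}n_{k,l}^{\Re(s_{k,l})}\bigr)$. As $j<r_i$, the constraints defining $S_{i,j}$ are just those of $S$ together with $n_{i,j}<n<n_{i,j+1}$; in particular $n<n_{i,j+1}$, so I can replace $n^{a}$ by $n_{i,j+1}^{a}$. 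Summing out $n$ via $\sum_{n_{i,j}<n<n_{i,j+1}}(n-n_{i,j})^{-1}\ll n_{i,j+1}^{\epsilon}$ and then $n_{i,j}$ (which now carries no exponent) at the cost of a factor $\le n_{i,j+1}$, and applying \eqref{vvvvv} to disentangle the cyclic constraints exactly as in the previous proofs, I would reduce to the convergence of a Lemma~\ref{21}-type series attached to the tuple $(r_1,\dots,r_{i-1},r_i-1,r_{i+1},\dots,r_d)$, in which the exponent of the variable $n_{i,j+1}$ has been lowered by $a+1+\epsilon$ while all other exponents are unchanged.

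The main obstacle is the bookkeeping at this last step: checking that the shortened series still meets the hypotheses of Lemma~\ref{21}. Partial sums of exponents of the shortened $i$-th block of length at most $r_i-j-1$ are untouched; for the longer ones the required strict lower bound decreases by one (one fewer variable), while the sum of exponents decreases by $a+1+\epsilon$ net (it gains $a$ from deleting $\Re(s_{i,j})=-a$ and loses $a+1+\epsilon$ on $n_{i,j+1}$). Thus one needs the original block inequality $\Re(s_{i,j})+\cdots+\Re(s_{i,r_i})>r_i-j+1$ of $W$ — which rearranges to $\Re(s_{i,j+1})+\cdots+\Re(s_{i,r_i})>r_i-j+1+a$ — to supply precisely the room to absorb the reduction, up to the harmless $\epsilon$, which one clears by taking $\epsilon$ smaller than the (finitely many, positive) slacks. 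One small wrinkle: when $j=1$, summing out $n_{i,1}$ also discards the cyclic constraint $n_{i,1}\le n_{i+1,r_{i+1}}$, but since dropping a constraint only enlarges the index set, the resulting acyclic configuration is still summable by \eqref{vvvvv}, Lemma~\ref{21}, and \eqref{abc}.
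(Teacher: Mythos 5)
Your proof is correct, and in the decisive case it takes a different route from the paper's. The skeleton is shared: both arguments use \eqref{vvvvv} to reduce to model series and both dispose of the case $j=r_i$ by pointwise domination using $\Re(s_{i,r_i})\ge 1$ (the paper states this as ``\eqref{b112} and \eqref{b113} are bounded by \eqref{a112} and \eqref{a113}''). The divergence is in the case $j<r_i$. The paper handles it in one stroke, with no split on the sign of $\Re(s_{i,j})$: writing $v_i=\Re(s_{i,j})$, the factor $n_{i,j}^{v_i}$ in the numerator cancels the factor $n_{i,j}^{v_i}$ of $|\boldsymbol{n}^{\boldsymbol{s}}|$, so $n_{i,j}$ is left with net exponent $0$ and is summed out against $(n-n_{i,j})^{-1}$ as a harmonic sum $\ll n^{\epsilon}$, while $n$ inherits both the exponent $v_i$ and the position of $n_{i,j}$ in the strict chain; Lemma \ref{21} then applies to a chain of the \emph{same} length with the \emph{same} exponent profile, up to a single $\epsilon$ (this is series \eqref{b111}). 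Your version splits off $\Re(s_{i,j})\ge 0$ (immediate domination by the previous lemma) and, for $\Re(s_{i,j})=-a<0$, pushes all the weight onto $n_{i,j+1}$ and shortens the chain by one; this works, but it forces exactly the slack accounting you describe, whereas the paper's substitution trick makes that accounting trivial. Two small remarks on your write-up: the phrase ``the sum of exponents decreases by $a+1+\epsilon$ net'' is inconsistent with your own parenthetical --- the net decrease of a tail sum containing both positions $j$ and $j+1$ is $1+\epsilon$ (gain $a$, lose $a+1+\epsilon$), which is what your subsequent comparison with the lowered target actually uses, so the slip is only verbal; and your observation that for $j=1$ the dropped link $n_{i,1}\le n_{i+1,r_{i+1}}$ merely breaks the cycle into a chain that is still controlled by iterating \eqref{vvvvv} is the right way to dispose of that wrinkle (note that every maximal run of depth-one blocks in the resulting chain is still followed by a block whose top variable has exponent strictly greater than $1$, so the $n_r^{\epsilon}$ contributions are absorbed).
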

\begin{proof}
 Following Eq.\eqref{vvvvv}, it suffices to show that
 \begin{align} 
  \label{b111}
  &\sum_{\substack{ 1\le m_1\le \cdots \le m_{d} \le n_r \\ 1\le n_1<\cdots<n_i<n<n_{i+1}<\cdots<n_r }} 
  \frac{ n_i^{v_i} }{ m_1^{u_1} \cdots m_{d}^{u_{d}} n_1^{v_1} \cdots n_r^{v_r} n^{v_i} (n-n_i) }, \\
  \label{b112}
  &\sum_{\substack{ 1\le m_1\le \cdots \le m_{d} \le n \\ 1\le n_1<\cdots<n_r<n }} 
  \frac{n_r^{v_r}}{ m_1^{u_1} \cdots m_{d}^{u_{d}} n_1^{v_1} \cdots n_r^{v_r} n^{v_r} (n-n_r) }, \\
  \label{b113}
  &\sum_{\substack{ 1\le m_1\le \cdots \le m_{d} \le n \\ 1\le m_1'<n \\ 1\le m_1' \le \cdots \le m_{d'}' \le n_r \\ n_1<\cdots<n_r }} 
  \frac{ m_1'^{u_1'} }{ m_1^{u_1} \cdots m_{d}^{u_{d}} m_1'^{u_1'} \cdots m_{d'}'^{u_{d'}'} n_1^{v_1} \cdots n_r^{v_r} n^{u_1'} (n-m_1') }  
 \end{align}
 are convergent when $u_1,\dots,u_d, u_1',\dots,u_{d'}'\ge1$ and $v_r>1, v_{r-1}+v_r>2, \dots, v_{1}+\cdots+v_r>r$. 
 
 For Eq.\eqref{b111}, 
 since
 \begin{align*}
  \sum_{n_i<n} \frac{1}{n-n_i} 
  \ll n^{\epsilon} 
 \end{align*}
 for any small $\epsilon>0$, we have
 \begin{align*} 
  &\sum_{\substack{ 1\le m_1\le \cdots \le m_{d} \le n_r \\ 1\le n_1<\cdots<n_i<n<n_{i+1}<\cdots<n_r }} 
  \frac{ n_i^{v_i} }{ m_1^{u_1} \cdots m_{d}^{u_{d}} n_1^{v_1} \cdots n_r^{v_r} n^{v_i} (n-n_i) } \\
  &\ll 
  \sum_{\substack{ 1\le m_1\le \cdots \le m_{d} \le n_r \\ 1\le n_1<\cdots<n_{i-1}<n<n_{i+1}<\cdots<n_r }} 
  \frac{ n_{i}^{v_i} n^{\epsilon}  }{ m_1^{u_1} \cdots m_{d}^{u_{d}} n_1^{v_1} \cdots n_r^{v_r} n^{v_i} }. 
 \end{align*}
 By following Lemma \ref{21}, we find the result. 

 The series Eq.\eqref{b112} and Eq.\eqref{b113} are bounded by Eq.\eqref{a112} and Eq.\eqref{a113}, respectively. 
\end{proof}

\begin{lem}
 Let $d$ be a positive integer and $r_1=\dots=r_d=1$. 
 For a positive integer $i$ with $1\le i\le d$, the series 
 \begin{align*}
  \sum_{S_{i,1}} \frac{ n_{i,1} }{ \boldsymbol{n}^{\boldsymbol{s}} n (n-n_{i,1} ) }
  &=\sum_{\substack{ n_{i,1} \le\cdots\le n_{d,1} \le n_{1,1} \le\cdots\le n_{i-1,1}\le n \\ n\ne n_{i,1} }} 
   \frac{ n_{i,1} }{ \boldsymbol{n}^{\boldsymbol{s}} n (n-n_{i,1} ) }, \\
  \sum_{S_{i,1}} \frac{ n_{i,1}^{s_{i,1}} }{ \boldsymbol{n}^{\boldsymbol{s}} n^{s_{i,1}} (n-n_{i,1} ) } 
  &=\sum_{\substack{ n_{i,1} \le\cdots\le n_{d,1} \le n_{1,1} \le\cdots\le n_{i-1,1}\le n \\ n\ne n_{i,1} }} 
   \frac{ n_{i,1}^{s_{i,1}} }{ \boldsymbol{n}^{\boldsymbol{s}} n^{s_{i,1}} (n-n_{i,1} ) }
 \end{align*}
 are absolutely convergent for $\boldsymbol{s}\in W$.
\end{lem}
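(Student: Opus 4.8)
The plan is to reduce the statement to the convergence of positive series and then to carry out the summation over $n$. The modulus of any summand of either series is obtained by replacing each $s_{k,1}$ by its real part $\sigma_k:=\Re(s_{k,1})$, all remaining factors being positive; since $\boldsymbol{\sigma}:=(\sigma_1,\dots,\sigma_d)$ still belongs to $W(\underbrace{1,\dots,1}_d)$, and this region is invariant under cyclic permutation of the coordinates, it suffices to prove convergence of the two positive series obtained by this substitution. Unwinding the definition of $S_{i,1}$ in the case $r_1=\dots=r_d=1$ — where $n_{i,1}<n<n_{i,r_i+1}=\infty$ means precisely $n>n_{i,1}$, that is, $n\ge n_{i,1}$ together with $n\ne n_{i,1}$ — produces exactly the domain in the statement, and a cyclic relabelling $n_{k,1}\mapsto n_{k-i+1,1}$ lets me assume $i=1$. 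Writing $m_k:=n_{k,1}$, the common domain becomes $1\le m_1\le m_2\le\dots\le m_d\le n$ with $n>m_1$, and $|\boldsymbol{n}^{\boldsymbol{s}}|=m_1^{\sigma_1}\cdots m_d^{\sigma_d}$. I will assume $d\ge2$, the case $d=1$ being elementary.

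First I would sum over $n$: with $m_1\le\dots\le m_d$ fixed, $n$ runs over $n\ge N:=\max\{m_d,m_1+1\}$, and splitting this range at $2m_1$ (using $n-m_1\ge1$ below it, $n-m_1\ge n/2$ above it, and $n\ge m_d$ on the lower part) gives, for every $\epsilon>0$,
\begin{align*}
 \sum_{n\ge N}\frac{m_1}{n(n-m_1)}\ll_\epsilon\frac{m_1^{1+\epsilon}}{m_d},
 \qquad
 \sum_{n\ge N}\frac{m_1^{\sigma_1}}{n^{\sigma_1}(n-m_1)}\ll_\epsilon\frac{m_1^{\sigma_1+\epsilon}}{m_d^{\sigma_1}}.
\end{align*}
Inserting these bounds, the two positive series are dominated by
\begin{align*}
 \sum_{1\le m_1\le\dots\le m_d}\frac{1}{m_1^{\sigma_1-1-\epsilon}\,m_2^{\sigma_2}\cdots m_{d-1}^{\sigma_{d-1}}\,m_d^{\sigma_d+1}}
 \quad\text{and}\quad
 \sum_{1\le m_1\le\dots\le m_d}\frac{1}{m_1^{-\epsilon}\,m_2^{\sigma_2}\cdots m_{d-1}^{\sigma_{d-1}}\,m_d^{\sigma_d+\sigma_1}}
\end{align*}
respectively (the middle factors are absent when $d=2$). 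A sum $\sum_{1\le m_1\le\dots\le m_d}m_1^{-a_1}\cdots m_d^{-a_d}$ converges as soon as $a_d>1$, $a_{d-1}+a_d>2$, $\dots$, $a_1+\dots+a_d>d$ (decompose the non-strict chain into finitely many strict ones and apply \eqref{abc}). For the first dominating series the suffix sum $a_{d-j+1}+\dots+a_d$ equals $1+(\sigma_{d-j+1}+\dots+\sigma_d)$ for $1\le j\le d-1$, where $\sigma_{d-j+1}+\dots+\sigma_d$ is a sum of $j\le d-1$ cyclically consecutive coordinates and is therefore $>j-1$, while for $j=d$ it equals $(\sigma_1+\dots+\sigma_d)-\epsilon$. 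For the second it equals $\sigma_1+(\sigma_{d-j+1}+\dots+\sigma_d)$, the sum of the $j+1$ cyclically consecutive coordinates $\sigma_{d-j+1},\dots,\sigma_d,\sigma_1$, which is $>j$ when $1\le j\le d-2$ and equals $\sigma_1+\dots+\sigma_d>d>j$ when $j=d-1$, while for $j=d$ it is again $(\sigma_1+\dots+\sigma_d)-\epsilon$. Taking $\epsilon<(\sigma_1+\dots+\sigma_d)-d$ makes every suffix sum exceed its length, so both dominating series converge.

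The step I expect to be the crux, and the only place where the exact shape of $W(\underbrace{1,\dots,1}_d)$ enters, is this last reorganisation: summing over $n$ transfers (up to an $\epsilon$) one unit of weight, respectively $\sigma_1$ units, off the smallest variable $m_1$ onto the largest variable $m_d$, and one must check that, after wrapping cyclically from $m_d$ past $m_1$, the resulting exponents satisfy the Euler--Zagier convergence conditions — which is exactly what the bounds on the cyclically consecutive partial sums of $\boldsymbol{\sigma}$ guarantee. Everything else is routine: the first-step estimates follow from $\sum_{n\ge M}n^{-t}\ll_t M^{1-t}$ for $t>1$ together with $\sum_{m=1}^{M}m^{-1}\ll_\epsilon M^{\epsilon}$, as in Lemma~\ref{21}.
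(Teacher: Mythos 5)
Your argument is correct, and its analytic core coincides with the paper's: fix the chain of $n_{k,1}$'s, sum over $n$ first so that (up to an $\epsilon$ in the exponent) one unit of weight, resp.\ $\sigma_1$ units, is moved off the smallest variable onto the largest one, and then invoke the Euler--Zagier convergence domain \eqref{abc}. The organization, however, is genuinely different. The paper proves the lemma by induction on $d$: the base case $d=1$ is quoted from the Okamoto--Onozuka paper, the induction hypothesis is used to discard the strata of the weak chain where some equality $n_{k,1}=n_{k+1,1}$ holds, the key estimate $\sum_{n_{i-1,1}\le n}\frac{1}{n(n-n_{i,1})}\ll n_{i-1,1}^{-1+\epsilon}$ is applied on the strictly increasing chain, and the second series is dispatched with ``similarly''. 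You avoid induction altogether: you keep the weak inequalities throughout, decompose into strict chains only at the very end (where the merged suffix conditions are visibly implied by the original ones), and verify explicitly that the cyclic partial-sum conditions defining $W(1,\dots,1)$ yield the conditions of \eqref{abc} for both dominating series --- including the second one, where the largest variable acquires the exponent $\sigma_d+\sigma_1$ and the check genuinely uses the hypothesis on sums of $j+1$ cyclically consecutive coordinates (plus the total-sum condition when the wrap-around exhausts all coordinates). Your route is longer but self-contained and makes transparent exactly where each defining inequality of $W$ is consumed; the paper's induction is slicker but leaves implicit both the admissibility of the merged exponents in the equality strata and the treatment of the second series.
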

\begin{proof}
 We prove that the first series converges absolutely by induction on $d$. 
 When $d=1$, we easily see the convergence (see \cite[Theorem 2.2]{OO15}, for example).
 Assume that the first series converges absolutely when $d-1$. 
 By the induction hypothesis, we need only to consider the series
 \[
  \sum_{ n_{i,1} <\cdots< n_{d,1} < n_{1,1} <\cdots< n_{i-1,1}\le n } 
   \frac{ n_{i,1} }{ \boldsymbol{n}^{\boldsymbol{s}} n (n-n_{i,1} ) }. 
 \]
 Since
 \[
  \sum_{ n_{i-1,1}\le n } \frac{ 1 }{ n(n-n_{i,1}) }
  \ll \frac{ 1 }{ n_{i-1,1}^{1-\epsilon} } 
 \]
 for any $\epsilon>0$, we have
 \begin{align*}
  &\sum_{ n_{i,1} <\cdots< n_{d,1} < n_{1,1} <\cdots< n_{i-1,1}\le n } 
   \frac{ n_{i,1} }{ \boldsymbol{n}^{\boldsymbol{s}} n (n-n_{i,1} ) } \\
  & \ll \sum_{ n_{i,1} <\cdots< n_{d,1} < n_{1,1} <\cdots< n_{i-1,1} } 
   \frac{ 1 }{ n_{i,1}^{\Re(s_{i,1})-1} n_{i+1,1}^{\Re (s_{i+1,1})} \cdots n_{i-2,1}^{\Re (s_{i-2,1})}  n_{i-1,1}^{\Re (s_{i-1,1})+1-\epsilon} }. 
 \end{align*}
 Following Eq.\eqref{abc}, we have the result. 
 Similarly, we can check that the second series converges absolutely. 
\end{proof}

\section{Proof and applications of Theorem \ref{main}}
\subsection{Proof of Theorem \ref{main}}
We define
\begin{align*}
 \widetilde\zeta_{i,j}^{(1)} (\boldsymbol{s})
 &:=\sum_{S_{i,j}}
  \frac{ n_{i,r_i}^{\delta_{j,r_i}} }{ \boldsymbol{n}^{\boldsymbol{s}} n^{\delta_{j,r_i}} (n-n_{i,j} ) }, \\
 \widetilde\zeta_{i,j}^{(2)} (\boldsymbol{s})
 &:=\sum_{S_{i,j}}
  \frac{ n_{i,j}^{s_{i,j}} }{ \boldsymbol{n}^{\boldsymbol{s}} n^{s_{i,j}} (n-n_{i,j} ) }. 
\end{align*}

\begin{lem} \label{31}
 For $\boldsymbol{s}\in W$, we have 
 \begin{align*}
  \widetilde\zeta_{i,j}^{(1)} (\boldsymbol{s})
  =
  \begin{cases}
   \displaystyle 
   \sum_{S} \frac{1}{\boldsymbol{n}^{\boldsymbol{s}}} \left( 1+\frac{1}{2}+\cdots+\frac{1}{n_{i,j+1}-n_{i,j}-1} \right) 
   &(j\ne r_i), \\
   \displaystyle 
   \sum_{T_i} \frac{1}{\boldsymbol{n}^{\boldsymbol{s}}} \left( \frac{1}{\max\{ 1, n_{i-1,1}-n_{i,r_i} \}}+\cdots
    +\frac{1}{\max\{ n_{i,r_i}, n_{i-1,1}-1 \}} \right) 
    &(j= r_i),
  \end{cases}
 \end{align*} 
 where 
 \begin{align*}
  T_{1}&:=\{(n_{1,1},\dots,n_{d,r_{d}})\in\mathbb{Z}_{\ge1}^{r} 
   \mid 
   n_{1,1}<\cdots<n_{1,r_{1}}, \dots, n_{d,1}<\cdots<n_{d,r_{d}}, \\
  &\qquad\qquad\qquad\qquad\qquad\qquad\qquad\qquad\qquad\qquad\qquad\,\,\, n_{1,1} \le n_{2,r_{2}}, \dots, n_{d-1,1} \le n_{d,r_{d}} \}, \\
  T_{i}&:=\{(n_{1,1},\dots,n_{d,r_{d}})\in\mathbb{Z}_{\ge1}^{r} 
   \mid 
   n_{1,1}<\cdots<n_{1,r_{1}}, \dots, n_{d,1}<\cdots<n_{d,r_{d}}, \\
  &\qquad n_{1,1} \le n_{2,r_{2}}, \dots, n_{i-2,1} \le n_{i-1,r_{i-1}}, 
   n_{i,1} \le n_{i+1,r_{i+1}}, \dots, n_{d-1,1} \le n_{d,r_{d}}, n_{d,1} \le n_{1,r_{1}} \} \\
   &\qquad\qquad\qquad\qquad\qquad\qquad\qquad\qquad\qquad\qquad\qquad\qquad\qquad\qquad\qquad\qquad\quad (i\ne1).
 \end{align*}
\end{lem}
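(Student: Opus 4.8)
The plan is to fix the pair $(i,j)$ and, using the absolute convergence of $\widetilde\zeta_{i,j}^{(1)}(\boldsymbol{s})$ proved in Section 2, to carry out the summation over the extra variable $n$ first, for each fixed tuple $(n_{1,1},\dots,n_{d,r_d})$. In the defining relations of $S_{i,j}$ the variable $n$ appears only in the constraint $n_{i,j}<n<n_{i,j+1}$ and in the ``$\Max$'' constraint $n_{i-1,1}\le\Max\{n_{i,r_i},n\}$, so the whole computation comes down to tracking how the latter behaves once the former is imposed. I would then treat the two cases $j\ne r_i$ and $j=r_i$ separately.

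For $j\ne r_i$ we have $\delta_{j,r_i}=0$, and the key point is that $n<n_{i,j+1}\le n_{i,r_i}$, so $\Max\{n_{i,r_i},n\}=n_{i,r_i}$ and the ``$\Max$'' constraint collapses to the ordinary link $n_{i-1,1}\le n_{i,r_i}$. Hence the conditions on $(n_{1,1},\dots,n_{d,r_d})$ are exactly those defining $S$, while $n$ runs freely over $\{n_{i,j}+1,\dots,n_{i,j+1}-1\}$. Carrying out $\sum_{n=n_{i,j}+1}^{n_{i,j+1}-1}1/(n-n_{i,j})=\sum_{m=1}^{n_{i,j+1}-n_{i,j}-1}1/m$ gives the first case (an empty sum, i.e.\ $0$, when $n_{i,j+1}=n_{i,j}+1$).

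For $j=r_i$ we have $\delta_{j,r_i}=1$ and $n_{i,r_i+1}=\infty$, so the constraint is just $n>n_{i,r_i}$; this forces $\Max\{n_{i,r_i},n\}=n$, turning the ``$\Max$'' link into $n_{i-1,1}\le n$. Therefore $(n_{1,1},\dots,n_{d,r_d})$ runs over $T_i$, which is $S$ with the single link $n_{i-1,1}\le n_{i,r_i}$ deleted (the $i=1$ instance being covered by $T_1$ through the conventions $n_{0,j}=n_{d,j}$ and $n_{d+1,r_{d+1}}=n_{1,r_1}$), while $n$ runs over $n\ge N:=\max\{n_{i,r_i}+1,\,n_{i-1,1}\}$. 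The partial fraction identity $n_{i,r_i}/(n(n-n_{i,r_i}))=1/(n-n_{i,r_i})-1/n$ makes the inner sum telescope to $\sum_{k=N-n_{i,r_i}}^{N-1}1/k$ (the tail $\sum_{k=M-n_{i,r_i}+1}^{M}1/k\to0$ as $M\to\infty$, and the range is non-empty because $n_{i,r_i}\ge1$); rewriting $N-n_{i,r_i}=\max\{1,\,n_{i-1,1}-n_{i,r_i}\}$ and $N-1=\max\{n_{i,r_i},\,n_{i-1,1}-1\}$ produces exactly the harmonic sum in the second case.

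The analytic input is minimal once the Section 2 estimates license the interchange of summations, so I expect the main obstacle to be organizational: keeping the cyclic conventions straight ($i=1$ wrapping back to block $d$, the $\infty$ convention for $j=r_i$, $n_{0,j}=n_{d,j}$) so as to confirm rigorously that ``$S_{i,j}$ with the $n$-constraint imposed'' splits as $S$ (respectively $T_i$) times the claimed range of $n$, with no interaction between $n$ and the remaining variables other than through the two constraints isolated above. A minor further check is the split $n_{i-1,1}\le n_{i,r_i}+1$ versus $n_{i-1,1}>n_{i,r_i}+1$, needed to verify that the telescoped endpoints agree with the $\max$-expressions in the statement.
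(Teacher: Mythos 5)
Your proposal is correct and follows essentially the same route as the paper: in both cases you sum over $n$ first, observing that the $\Max$ constraint collapses to recover $S$ (for $j\ne r_i$) or $T_i$ together with $n_{i-1,1}\le n$ (for $j=r_i$), and then evaluate the inner sum directly in the first case and by the partial fraction $\frac{n_{i,r_i}}{n(n-n_{i,r_i})}=\frac{1}{n-n_{i,r_i}}-\frac{1}{n}$ with telescoping in the second. Your identification of the endpoints $N-n_{i,r_i}=\max\{1,n_{i-1,1}-n_{i,r_i}\}$ and $N-1=\max\{n_{i,r_i},n_{i-1,1}-1\}$ matches the paper's stated harmonic sum exactly.
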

\begin{proof}
 When $j\ne r_i$, we have
 \begin{align*}
  \widetilde\zeta_{i,j}^{(1)} (\boldsymbol{s})
  &=\sum_{\substack{ S \\ n_{i,j}<n<n_{i,j+1} }} 
   \frac{ 1 }{ \boldsymbol{n}^{\boldsymbol{s}}  (n-n_{i,j} ) } \\
  &=\sum_{S} \frac{1}{\boldsymbol{n}^{\boldsymbol{s}}} \left( 1+\frac{1}{2}+\cdots+\frac{1}{n_{i,j+1}-n_{i,j}-1} \right). 
 \end{align*}
Conversely, when $j=r_i$, we have
 \begin{align*}
  \widetilde\zeta_{i,j}^{(1)} (\boldsymbol{s})
  &=\sum_{\substack{ T_i \\ n_{i,r_i}<n \\ n_{i-1,1} \le n }} 
   \frac{ n_{i,r_i} }{ \boldsymbol{n}^{\boldsymbol{s}} n(n-n_{i,j} ) } \\
  &=\sum_{\substack{ T_i \\ n_{i,r_i}<n \\ n_{i-1,1} \le n }} 
   \frac{ 1 }{ \boldsymbol{n}^{\boldsymbol{s}} } \left( \frac{1}{n-n_{i,j}} -\frac{1}{n} \right) \\
  &=\sum_{T_i} \frac{1}{\boldsymbol{n}^{\boldsymbol{s}}} \left( \frac{1}{\max\{ 1, n_{i-1,1}-n_{i,r_i} \}}+\cdots
    +\frac{1}{\max\{ n_{i,r_i}, n_{i-1,1}-1 \}} \right). \qedhere
 \end{align*}
\end{proof}

\begin{lem} \label{32}
 For $\boldsymbol{s}\in W$, we have 
 \begin{align*}
  \widetilde\zeta_{i,j}^{(2)} (\boldsymbol{s})
  =
  \begin{cases}
   \displaystyle 
   \sum_{T_{i+1}} \frac{1}{\boldsymbol{n}^{\boldsymbol{s}}} \left( \frac{1}{\max\{ 1, n_{i,1}-n_{i+1,r_{i+1}} \}}+\cdots
    +\frac{1}{ n_{i,1}-1 } \right) \quad (j=1), \\
   \displaystyle 
   \sum_{S} \frac{1}{\boldsymbol{n}^{\boldsymbol{s}}} \left( 1+\frac{1}{2}+\cdots+\frac{1}{n_{i,j}-n_{i,j-1}-1} \right) \quad (j\ne 1),
  \end{cases}
 \end{align*} 
\end{lem}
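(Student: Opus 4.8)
The plan is to mimic the computation carried out in the proof of Lemma~\ref{31}, exploiting the symmetry between the two cases there ($j\ne r_i$ versus $j=r_i$) and the two cases here ($j\ne 1$ versus $j=1$). The key point is that in $\widetilde\zeta_{i,j}^{(2)}$ the summand is $n_{i,j}^{s_{i,j}}/(\boldsymbol{n}^{\boldsymbol{s}} n^{s_{i,j}}(n-n_{i,j}))$, which should be rewritten so that the variable $n$ appears only through the factor $n_{i,j}^{s_{i,j}}/(n^{s_{i,j}}(n-n_{i,j}))$; the trick is the telescoping identity $\frac{n_{i,j}^{s_{i,j}}}{n^{s_{i,j}}(n-n_{i,j})}$ which, only in the relevant range of summation where $\delta_{j,r_i}=1$ does not intervene, one massages. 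Actually the cleaner route: note that for the $(2)$-series the weight $s_{i,j}$ appears literally, so there is no collapse to $1+\tfrac12+\cdots$ coming from a Kronecker delta; instead one uses the constraint in $S_{i,j}$, namely $n_{i,j}<n<n_{i,j+1}$ together with the cyclic inequalities, to replace the sum over $n$ by a finite harmonic-type sum whose length is governed by the gap below $n_{i,j}$ (for $j\ne 1$) or by $n_{i,1}$ relative to $n_{i+1,r_{i+1}}$ (for $j=1$).

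Concretely, first I would treat the case $j\ne 1$. Here the defining set $S_{i,j}$ forces $n_{i,j-1}<n_{i,j}$ and $n_{i,j}<n<n_{i,j+1}$, but the only interaction of $n$ with the rest is through $1/(n-n_{i,j})$ after one observes $n^{s_{i,j}}$ cancels against $n_{i,j}^{s_{i,j}}$ in the limiting sense — more precisely one should write $\frac{n_{i,j}^{s_{i,j}}}{n^{s_{i,j}}(n-n_{i,j})}=\frac{1}{n-n_{i,j}}-\big(\frac{1}{n-n_{i,j}}-\frac{n_{i,j}^{s_{i,j}}}{n^{s_{i,j}}(n-n_{i,j})}\big)$ and check the correction term telescopes away over the full sum, or alternatively re-index $n\mapsto n_{i,j}-n'$ is not available since $n>n_{i,j}$. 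The honest approach, matching Lemma~\ref{31}, is: the $n$-summation is over $n_{i,j}<n<n_{i,j+1}$ and one simply does not have the delta, so one must instead use the companion variable structure — so I expect the correct statement uses the lower neighbour: summing $\frac{n_{i,j}^{s_{i,j}}}{n^{s_{i,j}}(n-n_{i,j})}$ is not what appears; rather, after the substitution $n\mapsto$ a variable squeezed between $n_{i,j-1}$ and $n_{i,j}$ via the identity in the Definition (the two terms of $\widetilde\zeta_{i,j}$ are designed so that the $n$-ranges glue), the second piece becomes a sum over $n_{i,j-1}<n<n_{i,j}$ of $1/(n_{i,j}-n)$, giving $1+\tfrac12+\cdots+\tfrac{1}{n_{i,j}-n_{i,j-1}-1}$ over the set $T$. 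I would carry this out by expanding $\frac{n_{i,j}^{s_{i,j}}}{n^{s_{i,j}}(n-n_{i,j})}=\frac{1}{n-n_{i,j}}-\frac{1}{n}\cdot\frac{n^{s_{i,j}}-n_{i,j}^{s_{i,j}}}{n^{s_{i,j}}}$-type partial fractions only in the integer case and then appealing to analytic continuation from $W\cap\mathbb{Z}_{\ge1}^r$, exactly as the surrounding text does.

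For the case $j=1$, the analysis is parallel to the $j=r_i$ case of Lemma~\ref{31}: the constraint $n_{i,1}<n$ together with the cyclic inequality $n_{i-1,1}\le\operatorname{Max}\{n_{i,r_i},n\}$ (which, once $n_{i,r_i}$ is not the active bound) reads $n_{i,1}-n_{i+1,r_{i+1}}\le$ something, and one uses $\frac{n_{i,1}}{n(n-n_{i,1})}=\frac{1}{n-n_{i,1}}-\frac{1}{n}$ to get a finite sum of $1/k$ with $k$ running from $\max\{1,n_{i,1}-n_{i+1,r_{i+1}}\}$ up to $n_{i,1}-1$, over the set $T_{i+1}$. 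The main obstacle I anticipate is bookkeeping the ranges of the auxiliary index correctly against the many cyclic inequalities defining $S_{i,j}$ and $T_{i+1}$ — in particular getting the endpoints $\max\{1,n_{i,1}-n_{i+1,r_{i+1}}\}$ and the switch between $T$, $T_i$, $T_{i+1}$ exactly right, and justifying the interchange of the (absolutely convergent, by the lemmas of Section~2) summations; the algebraic partial-fraction step itself is routine.
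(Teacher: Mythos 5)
Your guess at the two intermediate expressions is correct: for $j\ne 1$ the series collapses to $\sum_{S}\sum_{n_{i,j-1}<n<n_{i,j}}\frac{1}{\boldsymbol{n}^{\boldsymbol{s}}(n_{i,j}-n)}$, and for $j=1$ to $\sum_{T_{i+1}}\sum_{n<n_{i,1},\,n\le n_{i+1,r_{i+1}}}\frac{1}{\boldsymbol{n}^{\boldsymbol{s}}(n_{i,1}-n)}$, after which the inner $n$-sums are exactly the stated harmonic segments. But the mechanism you actually commit to for reaching these expressions does not work. You propose to verify the identity by partial fractions ``only in the integer case'' and then recover the complex case by ``analytic continuation from $W\cap\mathbb{Z}_{\ge1}^r$''. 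That is not a valid deduction: $W\cap\mathbb{Z}_{\ge1}^r$ is a discrete set, so a holomorphic function on $W$ is not determined by its values there (compare $\sin(\pi s)$); you would need a Carlson-type theorem together with growth estimates in vertical strips, none of which you supply, and this is not ``what the surrounding text does'' --- the paper proves the complex identity directly and only afterwards specializes to integers in Section~3.2. Similarly, the identity $\frac{n_{i,1}}{n(n-n_{i,1})}=\frac{1}{n-n_{i,1}}-\frac{1}{n}$ you invoke for $j=1$ belongs to the $j=r_i$ case of $\widetilde\zeta^{(1)}_{i,j}$, where the exponent is the Kronecker delta and hence equal to $1$; in $\widetilde\zeta^{(2)}_{i,1}$ the exponent is the complex number $s_{i,1}$ and no such partial fraction is available.

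The missing observation, which is essentially the whole content of the paper's proof, is that no expansion in $s_{i,j}$ is needed at all. Since $\boldsymbol{n}^{\boldsymbol{s}}$ already contains the factor $n_{i,j}^{s_{i,j}}$, the summand
\[
\frac{n_{i,j}^{s_{i,j}}}{\boldsymbol{n}^{\boldsymbol{s}}\,n^{s_{i,j}}(n-n_{i,j})}
\]
is \emph{exactly} equal to $\frac{1}{\boldsymbol{n}^{\boldsymbol{s}}(n_{i,j}-n)}$ after interchanging the roles of the integers $n$ and $n_{i,j}$ (the sign of $n-n_{i,j}$ being absorbed by the relabelling, since the new $n$ is the smaller of the two). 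The whole manipulation is therefore a bijective re-indexing of an absolutely convergent multiple series, valid verbatim for every $\boldsymbol{s}\in W$; one only has to track how the inequalities defining $S_{i,j}$ transform under this swap --- giving $S$ with $n_{i,j-1}<n<n_{i,j}$ when $j\ne1$, and $T_{i+1}$ with $n<n_{i,1}$ and $n\le n_{i+1,r_{i+1}}$ when $j=1$ --- and then evaluate the inner $n$-sum. With that replacement for your partial-fraction-plus-continuation step, your outline becomes the paper's proof.
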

\begin{proof}
 When $j=1$, we have
 \begin{align*}
  \widetilde\zeta_{i,j}^{(2)} (\boldsymbol{s})
  &=\sum_{\substack{ T_{i+1} \\ n<n_{i,1} \\ n \le n_{i+1,r_{i+1}} }} 
   \frac{ 1 }{ \boldsymbol{n}^{\boldsymbol{s}} (n_{i,1}-n) } \\
  &=\sum_{T_{i+1}} \frac{1}{\boldsymbol{n}^{\boldsymbol{s}}} \left( \frac{1}{\max\{ 1, n_{i,1}-n_{i+1,r_{i+1}} \}}+\cdots
    +\frac{1}{ n_{i,1}-1 } \right). 
 \end{align*}
  When $j\ne1$, we have
 \begin{align*}
  \widetilde\zeta_{i,j}^{(2)} (\boldsymbol{s})
  &=\sum_{\substack{ S \\ n_{i,j-1}<n<n_{i,j} }} 
   \frac{ 1 }{ \boldsymbol{n}^{\boldsymbol{s}}  (n_{i,j}-n ) } \\
  &=\sum_{S} \frac{1}{\boldsymbol{n}^{\boldsymbol{s}}} \left( 1+\frac{1}{2}+\cdots+\frac{1}{n_{i,j}-n_{i,j-1}-1} \right). \qedhere
 \end{align*}
\end{proof}

\begin{proof}[Proof of Theorem \ref{main}]
 Note that
 \[
  \widetilde\zeta_{i,j}^{(1)} (\boldsymbol{s}) -\widetilde\zeta_{i,j+1}^{(2)} (\boldsymbol{s})=0
 \]
 holds for $j=1,\dots,r_i-1$.
 Then we have
 \begin{align*}
  \sum_{i=1}^{d} \sum_{j=1}^{r_i} \widetilde\zeta_{i,j} (\boldsymbol{s})
  =\sum_{i=1}^{d} ( \widetilde\zeta_{i,r_i}^{(1)} (\boldsymbol{s}) -\widetilde\zeta_{i,1}^{(2)} (\boldsymbol{s}) ). 
 \end{align*}
 By Lemmas \ref{31} and \ref{32}, we have 
 \begin{align*}
  \sum_{i=1}^{d} \sum_{j=1}^{r_i}  \widetilde\zeta_{i,j} (\boldsymbol{s})
  &=\sum_{i=1}^{d} \sum_{T_{i+1}} \frac{1}{\boldsymbol{n}^{\boldsymbol{s}}} \left( \frac{1}{n_{i,1}}+\cdots
   +\frac{1}{ \max\{ n_{i+1,r_{i+1}},n_{i,1}-1 \} } \right) \\  
  &=\sum_{i=1}^{d} \sum_{\substack{ T_{i+1} \\ n_{i,1} \le n \le n_{i+1,r_{i+1}} }} 
   \frac{1}{ \boldsymbol{n}^{\boldsymbol{s}} n}. 
 \end{align*}
 This completes the proof. 
\end{proof}

\subsection{Remarks on Theorems \ref{cycrel} and \ref{main}, Eq.\eqref{cccrel}, and the cyclic sum formula }
First, we show the equivalence of Theorem \ref{cycrel} and Eq.\eqref{cccrel}. 
Hereinafter, we write $\boldsymbol{s}=\boldsymbol{k}$ if $\boldsymbol{s}\in W \cap \mathbb{Z}_{\ge1}^r$. 
Using $u_i:=z_{k_{i,1}}\cdots z_{k_{i,r_i}}$ for $i=1,\dots,d$, we have
\[
 y\shaub u_{i}
 =\sum_{j=1}^{r_i} \sum_{m=\delta_{j,r_i}}^{k_{i,j}-1} 
 z_{k_{i,1}}\cdots z_{k_{i,j-1}} 
 z_{k_{i,j}-m} z_{m+1}
 z_{k_{i,j+1}}\cdots z_{k_{i,r_i}}. 
\]
Then we have
\begin{align*}
 &\sum_{i=1}^{d}
  Z^\mathrm{cyc}(u_{1}\otimes\cdots\otimes u_{i-1}\otimes(y\shaub u_{i})\otimes u_{i+1}\otimes\cdots\otimes u_{d}) \\
 &=\sum_{i=1}^{d} \sum_{j=1}^{r_i} \sum_{m=\delta_{j,r_i}}^{k_{i,j}-1}
  Z^\mathrm{cyc}(u_{1}\otimes\cdots\otimes u_{i-1}\otimes
 z_{k_{i,1}}\cdots z_{k_{i,j-1}} 
 z_{k_{i,j}-m} z_{m+1}
 z_{k_{i,j+1}}\cdots z_{k_{i,r_i}} \\ 
  &\qquad\qquad\qquad\qquad\qquad
 \otimes u_{i+1}\otimes\cdots\otimes u_{d}) \\
 &=\sum_{i=1}^{d} \sum_{j=1}^{r_i} \sum_{m=\delta_{j,r_i}}^{k_{i,j}-1} \sum_{S_{i,j}} 
  \frac{ n_{i,j}^{m} }{ \boldsymbol{n}^{\boldsymbol{k}} n^{m+1} }. 
\end{align*} 
Since
\begin{align*}
 &\sum_{i=1}^{d} 
  Z^\mathrm{cyc}(u_{1}\otimes\cdots\otimes u_{i}\otimes y\otimes u_{i+1}\otimes\cdots\otimes u_{d})
 =\sum_{i=1}^{d} \sum_{S_i} \frac{ 1 }{ \boldsymbol{n}^{\boldsymbol{k}} n }, 
\end{align*}
Theorem \ref{cycrel} and Eq.\eqref{cccrel} are found to be equivalent. 
 
The correspondence between the function $\widetilde\zeta_{i,j}$ and the left-hand side of Eq.\eqref{cccrel} is shown as follows: 
\begin{align*}
 \sum_{i=1}^{d} \sum_{j=1}^{r_i} \widetilde\zeta_{i,j} (\boldsymbol{k}) 
 &=\sum_{i=1}^{d} \sum_{j=1}^{r_i} \sum_{S_{i,j}} 
   \biggl(
    \frac{ n_{i,r_i}^{\delta_{j,r_i}} }{ \boldsymbol{n}^{\boldsymbol{k}} n^{\delta_{j,r_i}} (n-n_{i,j} ) }
    -\frac{ n_{i,j}^{k_{i,j}} }{ \boldsymbol{n}^{\boldsymbol{k}} n^{k_{i,j}} (n-n_{i,j} ) }
   \biggr) \\
 &=\sum_{i=1}^{d} \sum_{j=1}^{r_i} \sum_{S_{i,j}} 
   \biggl(
    \sum_{m\ge0} \frac{ n_{i,j}^{m+\delta_{j,r_i}} }{ \boldsymbol{n}^{\boldsymbol{k}} n^{m+1+\delta_{j,r_i}} }
    -\sum_{m\ge0} \frac{ n_{i,j}^{s_{i,j}+m} }{ \boldsymbol{n}^{\boldsymbol{k}} n^{k_{i,j}+m+1} }
   \biggr) \\
 &=\sum_{i=1}^{d} \sum_{j=1}^{r_i} \sum_{m=\delta_{j,r_i}}^{k_{i,j}-1} \sum_{S_{i,j}} 
 \frac{ n_{i,j}^{m} }{ \boldsymbol{n}^{\boldsymbol{k}} n^{m+1} }.
\end{align*}
Hence Theorems \ref{cycrel} and \ref{main}, and Eq.\eqref{cccrel} are equivalent at the integer points. 

Last, we explain that the cyclic relation implies the cyclic sum formula for MZSVs mentioned in Remark \ref{ooooo}. 
Note that the following explanation is similar to \cite[Section 5.1]{HMM19}.
Assume $r_1=\cdots=r_d=1$. 
From Eq.\eqref{cccrel}, for $\boldsymbol{k}\in W \cap \mathbb{Z}_{\ge1}^r$, we have
\begin{align*} 
 \sum_{i=1}^{d} \sum_{m=1}^{k_{i}-1} 
  \sum_{\substack{ n_{i} \le\cdots\le n_{d} \le n_{1} \le\cdots\le n_{i-1}\le n \\ n\ne n_{i} }} 
  \frac{ n_{i}^{m} }{ \boldsymbol{n}^{\boldsymbol{k}} n^{m+1} } 
 =\sum_{i=1}^{d} 
  \sum_{ n_1\le \cdots \le n_i\le n\le n_{i+1} \le \cdots\le n_d\le n_1 } 
  \frac{ 1 }{ \boldsymbol{n}^{\boldsymbol{k}} n }.  
\end{align*}
The left-hand side of the above equality equals
\begin{align*}
 \sum_{i=1}^{d} \sum_{m=1}^{k_{i}-1} 
 \left( 
  \zeta^{\star}(k_{i}-m,k_{i+1},\dots,k_{d},k_{1},\dots,k_{i-1},m+1)
  -\zeta (k_1+\cdots+k_d+1) 
 \right). 
\end{align*}
The right-hand side of the above equality equals
\begin{align*}
 d \zeta (k_1+\cdots+k_d+1). 
\end{align*}
Then we have Eq.\eqref{llmel}.

\section*{Acknowledgment}
This work was supported by JSPS KAKENHI Grant Number JP19K14511.


\end{document}